\newtheorem{theorem}{Theorem}[section]
\newtheorem{lemma}[theorem]{Lemma}
\newtheorem{proposition}[theorem]{Proposition}
\newtheorem{corollary}[theorem]{Corollary}
\newtheorem{remark}[theorem]{Remark}
\newtheorem{assumption}[theorem]{Assumption}
\newtheorem{definition}[theorem]{Definition}
\newcommand{\norm}[2][]{\ensuremath{\left\Vert #2 \right\Vert_{#1}}}
\renewcommand{\vec}[1]{\mathbf{#1}}
\begin{document}
\title{The Limit Points of (Optimistic) Gradient Descent in Min-Max Optimization}


\author{Constantinos Daskalakis\\MIT\\costis@csail.mit.edu
\and Ioannis Panageas\\MIT\\ioannis@csail.mit.edu}


\date{}
\maketitle
\begin{abstract}
Motivated by applications in Optimization, Game Theory, and the training of Generative Adversarial Networks, the convergence properties of first order methods in min-max problems have received extensive study. It has been recognized that they may cycle, and there is no good understanding of their limit points when they do not. When they converge, do they converge to local min-max solutions? We characterize the limit points of two basic first order methods, namely Gradient Descent/Ascent (GDA) and Optimistic Gradient Descent Ascent (OGDA).  We show that both dynamics avoid unstable critical points for almost all initializations. Moreover, for small step sizes and under mild assumptions, the set of  \{OGDA\}-stable critical points is a superset of \{GDA\}-stable critical points, which is a superset of local min-max solutions (strict in some cases). The connecting thread is that the behavior of these dynamics can be studied from a dynamical systems perspective.
\end{abstract}

\section{Introduction}
\label{sec:intro}

\noindent

The celebrated min-max theorem was a founding stone in the development of Game Theory~\cite{VN28}, and is intimately related to strong linear programming duality~\cite{A13}, Blackwell's approachability theory~\cite{B56}, and the theory of no-regret learning~\cite{C06}. The theorem states that if $f(\vec{x},\vec{y})$ is a convex-concave function, and $\cal X$, $\cal Y$ are compact and convex subsets of Euclidean space, then
\begin{equation}\label{eq:minimax theorem}
\min_{x \in \mathcal{X}} \max_{y \in \mathcal{Y}} f(\vec{x},\vec{y}) = \max_{y \in \mathcal{Y}} \min_{x \in \mathcal{X}}  f(\vec{x},\vec{y}).
\end{equation}
If $f(\vec{x},\vec{y})$ represents the payment of the $\cal X$ player to the $\cal Y$ player under choices of strategies $\vec{x} \in {\cal X}$ and $\vec{y} \in {\cal Y}$ by these two players, the min-max theorem reassures us that an equilibrium of the game exists, and that the equilibrium payoffs to both players are unique.

What does not follow directly from the min-max theorem is whether there exist dynamics via which players would arrive at equilibrium if they were to follow some simple rule to update their current strategies. This has been the topic of a long line of investigation starting with Julia Robinson's celebrated analysis of fictitious play~\cite{B51,R51}, and leading to the development of no-regret learning~\cite{C06}.

Renewed interest in this problem has been recently motivated by the task of training Generative Adversarial Networks (GANs)~\cite{GAN14,ACB17}, where two deep neural networks, the generator and the discriminator, are trained in tandem using first order methods, aiming at solving a min-max problem, of the following form, albeit typically with a non convex-concave objective function $f(\vec{x},\vec{y})$:
\begin{equation}\label{eq:minmax}
\inf_{x \in \mathcal{X}} \sup_{y \in \mathcal{Y}} f(\vec{x},\vec{y}).
\end{equation}
Here $\vec{x}$ represents the parameters of the generator deep neural net, $\vec{y}$ represents the parameters of the discriminator neural net, and $f(\vec{x},\vec{y})$ is some measure of how close the distribution generated by the generator appears to the true distribution from the perspective of the discriminator.

Min-max optimization in non convex-concave settings is a central problem for many research communities, however our knowledge is very limited from optimization perspective. Moreover, for such applications of first-order methods to min-max problems in Machine Learning, it is especially important that the last-iterate maintained by the min and the max dynamics converges to a desirable solution. Unfortunately, even when $f(\vec{x},\vec{y})$ is convex-concave, it is rare that guarantees are known for the last iterate (see \cite{PPP17, MPP18, MPRVY17} for continuous time learning dynamics that may cycle). Some guarantees are known for continuous-time dynamics~\cite{CGC17}, but for discrete-time dynamics it is typically only shown that the average-iterates converge to min-max equilibrium. Recent work of~\cite{DISZ17} shows that, while Gradient Descent/Ascent (GDA) dynamics performed by the min/max players may diverge, the Optimistic version dynamics of~\cite{RS13} exhibit last iterate convergence to min-max solutions (which we shall call Optimistic Gradient Descent/Ascent (OGDA)), whenever $f(\vec{x},\vec{y})$ is linear in $\vec{x}$ and $\vec{y}$. The goal of our paper is to understand the limit points of GDA and OGDA dynamics (points that last iterate might converge to) for general functions $f(\vec{x},\vec{y})$. In particular, we answer the following questions:
\begin{itemize}
\item are the stable limit points of GDA and OGDA locally min-max solutions?
\item how do the stable limit points of GDA and OGDA relate to each other?
\end{itemize}
We provide answers to these questions after defining our dynamics of interest formally.

\paragraph{GDA and OGDA Dynamics.} Assume from now on that $\mathcal{X} = \mathbb{R}^{n}$, $\mathcal{Y} = \mathbb{R}^m$ and $f$ is a real-valued function in $C^2$, the space of
twice-continuously differentiable functions (unconstrained case). Perhaps the most natural approach to solve (\ref{eq:minmax}) is by doing gradient descent on $\vec{x}$ and gradient ascent on $\vec{y}$ (GDA), i.e.,
\begin{equation}\label{eq:gradient}
\begin{array}{ll}
\vec{x}_{t+1} = \vec{x}_t - \alpha \nabla_{\vec{x}} f(\vec{x}_t,\vec{y}_t),\\
\vec{y}_{t+1} = \vec{y}_t + \alpha \nabla_{\vec{y}} f(\vec{x}_t,\vec{y}_t),
\end{array}
\end{equation}
with some constant step size $\alpha>0$\footnote{Note that $\alpha>0$ for the rest of this paper.}.
However, there are examples (functions $f$ and initial points $(\vec{x}_0, \vec{y}_0)$) in which the system of equations (\ref{eq:gradient}) cycles (see \cite{DISZ17}). To break down this behavior, the authors in \cite{DISZ17} analyzed another optimization algorithm which is called Optimistic Gradient Descent/Ascent (OGDA)\footnote{We note that OGDA has some resemblance with Polyak’s heavy ball method. However, one important difference is that OGDA has “negative momentum” while the heavy ball method has “positive momentum.”}, the equations of which boil down to the following:
\begin{equation}\label{eq:OGDA}
\begin{array}{ll}
\vec{x}_{t+1} = \vec{x}_t - 2\alpha \nabla_{\vec{x}} f(\vec{x}_t,\vec{y}_t) + \alpha \nabla_{\vec{x}} f(\vec{x}_{t-1},\vec{y}_{t-1}),\\
\vec{y}_{t+1} = \vec{y}_t + 2\alpha \nabla_{\vec{y}} f(\vec{x}_t,\vec{y}_t) - \alpha \nabla_{\vec{y}} f(\vec{x}_{t-1},\vec{y}_{t-1}).
\end{array}
\end{equation}
One of their key results was to show convergence to the $\min \max$ solution for the case of bilinear objective functions, namely $f(\vec{x},\vec{y}) = \vec{x}^{\top}A \vec{y}$.\\

\noindent
\textbf{Our contribution and techniques:}
\noindent
In this paper we analyze Gradient Descent/Ascent (GDA) and Optimistic Gradient Descent/Ascent (OGDA) dynamics applied to min-max optimization problems. Our starting point is to show that both dynamics avoid their unstable fixed points (GDA-unstable and OGDA-unstable respectively, as defined in Section~\ref{sec:definitions}). This is shown by using techniques from dynamical systems, following the line of work of recent papers in Optimization and Machine Learning \cite{MPP15, LPPSJR17, JZBWJ16}. In a nutshell we show that the update rules of both dynamics are local diffeomorphisms\footnote{A local diffeomorphism is a function that locally is invertible, smooth and its (local) inverse is also smooth.} and we then make use of Center-Stable manifold theorem \ref{thm:manifold}. These results are given as Theorems~\ref{thm:measure}, \ref{thm:measure2}. One important step in our approach is the construction of a dynamical system for OGDA in order to apply dynamical systems' techniques.

We next study the set of stable fixed points of GDA dynamics and their relation to locally min-max solutions, called {\em local min-max}.\footnote{In optimization literature they are called {\em local saddles}.}). Informally, a local min-max critical point $(\vec{x}^*,\vec{y}^*)$ satisfies the following: compared to value $f(\vec{x}^*,\vec{y}^*)$, if we fix $\vec{x}^*$ and perturb $\vec{y}^*$ infinitesimally, the value of $f$ does not \textit{increase} and similarly if we fix $\vec{y}^*$ and perturb $\vec{x}^*$ infinitesimally, the value of $f$ does not \textit{decrease}. We show that the set of stable fixed points of GDA is a superset of the set of local min-max and there are functions in which this inclusion is strict. This is given in Lemmas~\ref{lem:minmaxGDA}, \ref{lem:real part}, \ref{lem:converse}.

Finally, we analyze OGDA dynamics which is a bit trickier than GDA due to the nature of the dynamics, namely the existence of memory in the dynamics: the next iterate depends on the gradient of the current and previous point. We construct a dynamical system that captures OGDA dynamics (see Equation (\ref{eq:mysystem})), using a construction that is commonly employed in differential equations. Importantly, we establish a mapping (relation) between the eigenvalues of the Jacobian of the update rules of both GDA and OGDA, showing that OGDA stable fixed points is a superset of GDA stable ones (under mild assumptions on the stepsize), namely (we suggest the reader to see first Remark \ref{rem:fixedpoints} to avoid confusion):
\[
\begin{array}{lcl}
\mbox{Local min-max} & \subset & \mbox{GDA-stable} \  \subset \ \mbox{OGDA-stable}
\end{array}
\]
We note that the inclusion above are strict.

\noindent
\textbf{Notation:} Vectors in $\mathbb{R}^n, \mathbb{R}^m$ are denoted in boldface $\vec{x}, \vec{y}$. Time indices are denoted by subscripts. Thus, a time indexed vector $\vec{x}$ at time $t$ is denoted as $\vec{x}_t$. We denote by $\nabla_{\vec{x}} f(\vec{x},\vec{y})$ the gradient of $f$ with respect to variables in $\vec{x}$ (of dimension the same as $\vec{x}$) and by $\nabla^2_{\vec{x}\vec{y}} f$ the part of the Hessian in which the derivative of $f$ is taken with respect to a variable in $\vec{x}$ and then a variable in $\vec{y}$. We use the letter $J$ to denote the Jacobian of a function (with appropriate subscript), $\vec{I}_k, \vec{0}_{k \times l}$ to denote the identity and zero matrix of sizes $k \times k$ and $k \times l$ respectively\footnote{We also use $\vec{0}$ to denote the zero vector.}, $\rho(A)$ for the spectral radius of matrix $A$ and finally we use $f^t$ to denote the composition of $f$ by itself $t$ times.

\subsection{Important Definitions}
\label{sec:definitions}

We have already stated our min-max problem of interest~\eqref{eq:minmax} as well as the Gradient Descent/Ascent (GDA) dynamics~\eqref{eq:gradient} and Optimistic Gradient Descent/Ascent (OGDA) dynamics~\eqref{eq:OGDA} that we plan to analyze. We provide some further definitions.

\paragraph{Dynamical Systems.}
A recurrence relation of the form $\vec{x}_{t+1} = w(\vec{x}_{t})$ is a discrete time dynamical system, with update rule $w:\mathcal{S} \to \mathcal{S}$ for some convex set $\mathcal{S} \subset \mathbb{R}^n$. Function $w$ is assumed to be continuously differentiable for the purpose of this paper.
The point $\vec{z}$ is called a \textit{fixed point} or \textit{equilibrium} of $w$ if $w(\vec{z}) = \vec{z}$. We will be interested in the following standard notions of fixed point stability.
\begin{definition}[{(Linear) stability}]
Let $w$ be continuously differentiable. We call a fixed point $\vec{z}$ {\em linearly stable} or just {\em stable} if, for the Jacobian $J$ of $w$ computed at $\vec{z}$, it holds that its spectral radius $\rho(J)$ is at most one and otherwise we call it {\em linearly unstable} or just {\em unstable}.
\end{definition}

\begin{definition}[Lyapunov and Asymptotic Stability]
A fixed point $\vec{z}$ of $w$ is called {\em Lyapunov stable} if, for every $\epsilon > 0$, there exists a $\delta =
\delta(\epsilon) > 0$ such that if $\vec{x} \in \mathcal{B}_{\delta} $ with $\mathcal{B}_{\delta} = \{\vec{y}\in \mathcal{S} : \norm{\vec{y}-\vec{z}} < \delta\}$\footnote{Ball of radius $\delta$.} we have that $\norm{w^n(\vec{x})-\vec{z}} < \epsilon$ for every $n \geq 0$. That is, if dynamics starts close enough to $\vec{z}$,  it remains close for all times.

A fixed point $\vec{z}$ of $w$ is called (locally) asymptotically stable (or attracting) if it is Lyapunov stable and there exists a $\delta > 0$ such that, for all $\vec{x} \in \mathcal{B}_{\delta}$ we have that $\norm{w^n(\vec{x})-\vec{z}} \to 0$ as $n \to \infty$. That is, there is a small neighborhood around $\vec{z}$ so that, for all initializations in that neighborhood, the dynamics converges to $\vec{z}$.
\end{definition}

\begin{definition}[Hyperbolicity]
We call a fixed point $\vec{z}$ \textit{hyperbolic} iff the Jacobian $J$ of $w$ computed at $\vec{z}$ has no eigenvalues with absolute value $1$.
\end{definition}

The following are well-known facts.
\begin{proposition}[e.g.~\cite{G07}]\label{prop:linear implies stable}
If the Jacobian of the update rule at a stable fixed point $\vec{z}$ has spectral radius less than one, then the fixed point is asymptotically stable.
Therefore, if a fixed point $\vec z$ is hyperbolic, then linear stability implies asymptotic stability.
\end{proposition}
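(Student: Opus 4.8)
The plan is to reduce the statement to a one-step contraction estimate after replacing the Euclidean norm by one adapted to $J := Dw(\vec z)$, the Jacobian of $w$ at the fixed point. The first step is to invoke the standard fact from matrix analysis that, since $\rho(J) < 1$, for every $\varepsilon > 0$ small enough that $\lambda := \rho(J) + \varepsilon < 1$ there is a vector norm $\norm[\ast]{\cdot}$ on the ambient space whose induced operator norm satisfies $\norm[\ast]{J} \le \lambda$; this is proved by passing to the real Jordan form of $J$ and rescaling generalized eigenvectors so that each Jordan block has off-diagonal entries of magnitude at most $\varepsilon$. (We may assume $\vec z$ lies in the interior of $\mathcal{S}$, as it does in all our applications, where $\mathcal S$ is the whole Euclidean space, so no boundary issues arise.)

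Next I would Taylor-expand: because $w$ is $C^1$ and $w(\vec z) = \vec z$, we can write $w(\vec x) - \vec z = J(\vec x - \vec z) + r(\vec x)$ with $\norm[\ast]{r(\vec x)} = o\!\left(\norm[\ast]{\vec x - \vec z}\right)$ as $\vec x \to \vec z$. Consequently there is a $\delta > 0$ such that, on the $\norm[\ast]{\cdot}$-ball of radius $\delta$ around $\vec z$, one has $\norm[\ast]{w(\vec x) - \vec z} \le \mu \, \norm[\ast]{\vec x - \vec z}$ with $\mu := \tfrac12(1 + \lambda) < 1$, the remainder having been absorbed into the constant by shrinking $\delta$. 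Since all norms on a finite-dimensional space are equivalent, it is harmless to phrase everything in $\norm[\ast]{\cdot}$ and translate back to the Euclidean metric at the end.

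The third step is a straightforward induction. If $\vec x$ lies in the $\norm[\ast]{\cdot}$-ball of radius $\delta$, then $\norm[\ast]{w(\vec x) - \vec z} \le \mu \, \norm[\ast]{\vec x - \vec z} < \delta$, so $w(\vec x)$ lies in the same ball, and inductively $\norm[\ast]{w^{n}(\vec x) - \vec z} \le \mu^{n}\,\norm[\ast]{\vec x - \vec z}$ for every $n \ge 0$. This simultaneously gives Lyapunov stability (given the $\epsilon$ appearing in the definition, initialize inside a small enough ball, using norm equivalence to convert radii) and the convergence $w^{n}(\vec x) \to \vec z$; together these are exactly asymptotic stability. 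For the ``Therefore'' clause: a hyperbolic fixed point has no eigenvalue of modulus $1$, so linear stability, i.e. $\rho(J) \le 1$, actually forces $\rho(J) < 1$, and the argument above applies verbatim.

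I expect the only real content to be the adapted-norm lemma of the first step — the rest is a routine perturbation argument around a contracting linear map. If one prefers to avoid constructing $\norm[\ast]{\cdot}$ explicitly, an alternative is to use Gelfand's formula $\rho(J) = \lim_{k\to\infty}\norm{J^{k}}^{1/k}$ to fix a $k_0$ with $\norm{J^{k_0}} < 1$ in the Euclidean norm, carry out the contraction argument for the map $w^{k_0}$, and control the finitely many intermediate iterates using continuity of $w$ near $\vec z$; I would keep this merely as a fallback, since the adapted-norm version is cleaner.
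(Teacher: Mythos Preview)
Your argument is correct and is the standard proof of this classical fact. Note, however, that the paper does not supply its own proof of this proposition: it is stated as a well-known result with a citation to \cite{G07}, so there is no ``paper's approach'' to compare against. Your write-up via an adapted norm (or the Gelfand-formula alternative) is exactly the textbook route one would expect such a reference to contain.
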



\begin{remark}[\textbf{Fixed points of GDA, OGDA dynamics}]\label{rem:fixedpoints}
It is easy to see that a fixed point of the GDA dynamics (\ref{eq:gradient}) arises whenever $(\vec{x}_{t+1},\vec{y}_{t+1})=(\vec{x}_{t},\vec{y}_{t})$, or in other words whenever $(\vec{x}_{t},\vec{y}_{t})=(\vec{x},\vec{y})$ such that $\nabla f(\vec{x},\vec{y}) = \vec{0}$.

Since the OGDA dynamics (\ref{eq:OGDA}) has memory, it is more appropriate to think of the dynamics as mapping a quadruple $(\vec{x}_{t},\vec{y}_{t},\vec{x}_{t-1},\vec{y}_{t-1})$ to a quadruple $(\vec{x}_{t+1},\vec{y}_{t+1},\vec{x}_t,\vec{y}_t)$.
In this case, a fixed point arises whenever $(\vec{x}_{t+1},\vec{y}_{t+1},\vec{x}_t,\vec{y}_t)=(\vec{x}_{t},\vec{y}_{t},\vec{x}_{t-1},\vec{y}_{t-1})$, or in other words whenever $(\vec{x}_{t},\vec{y}_{t},\vec{x}_{t-1},\vec{y}_{t-1})=(\vec{x},\vec{y},\vec{x},\vec{y})$ and $\nabla f(\vec{x},\vec{y}) = \vec{0}$.


We should stress in particular that whenever we say that the set of OGDA-stable fixed points is a super-set of the GDA-stable fixed points, we will be somewhat abusing notation, since the fixed points of OGDA lie in $\mathbb{R}^{2n+2m}$ while the fixed points of GDA lie in $\mathbb{R}^{n+m}$. However, as discussed above, a fixed point of OGDA is of the form $(\vec{x},\vec{y},\vec{x},\vec{y})$, and we can thus project it to its first two components without any loss of information to obtain a point in $\mathbb{R}^{n+m}$. When we relate fixed points of OGDA to fixed points of GDA we will implicitly apply this projection.
\end{remark}

Given Proposition~\ref{prop:linear implies stable}, it follows that spectral analysis of the Jacobian of the fixed points can give us qualitative information about the local behavior of the dynamics. Unless otherwise specified, throughout this paper, whenever we say ``stable'' we mean linearly stable. GDA/OGDA-stable critical points are critical points that are stable with respect to GDA/OGDA dynamics (for fixed stepsize $\alpha$, otherwise are unstable). Moreover since different choices of stepsize $\alpha$ might give different stability for GDA and OGDA dynamics, we are interested in the case $\alpha$ is ``\textit{sufficiently}" small. Therefore in the sections we \textit{characterize} the GDA/OGDA-stable critical points, a point $(\vec{x}^*,\vec{y}^*)$ is classified as GDA/OGDA-stable if there exists a sufficiently small number $\beta>0$ such that for all stepsizes $0<\alpha<\beta$ we have that the $(\vec{x}^*,\vec{y}^*)$ is a stable fixed point of GDA/OGDA dynamics (in case there exists a small $\beta>0$ so that for all stepsizes $0<\alpha<\beta$ we have that $(\vec{x}^*,\vec{y}^*)$ is an unstable fixed point of GDA/OGDA dynamics, it is classified as GDA/OGDA-unstable).

\paragraph{Optimization.} We use the following standard terminology.

\begin{definition}\label{def:main}
For a min-max problem~\eqref{eq:minmax} where $f$ is twice continuously  differentiable,
\begin{itemize}
  \item A point $(\vec{x}^*,\vec{y}^*)$ is a {\em critical point} of $f$ if $\nabla f (\vec{x}^*,\vec{y}^*)= \vec{0}$.
  \item A critical point $(\vec{x}^*,\vec{y}^*)$ is {\em isolated} if there is a neighborhood $U$ around $(\vec{x}^*,\vec{y}^*)$ where $(\vec{x}^*,\vec{y}^*)$ is the only critical point.\footnote{If the critical points are isolated then they are countably many or finite.} Otherwise it is called {\em non-isolated}.
	
  \item A critical point $(\vec{x}^*,\vec{y}^*)$ is a {\em local min-max point} if there exists a neighborhood $U$ around $(\vec{x}^*,\vec{y}^*)$ so that for all $(\vec{x},\vec{y}) \in U$ we have that $f(\vec{x}^*,\vec{y}) \leq f(\vec{x}^*,\vec{y}^*) \leq f(\vec{x},\vec{y}^*)$.\footnote{In optimization literature these critical points are also called local saddle points. If $U$ is the whole domain then we call it global min-max.}
  \item A critical point $(\vec{x}^*,\vec{y}^*)$ is a {\em strongly local min-max point} if $\lambda_{\min} (\nabla^2 _{\vec{x}\vec{x}}f(\vec{x}^*,\vec{y}^*))>0$  and\\ $\lambda_{\max} (\nabla^2 _{\vec{y}\vec{y}}f(\vec{x}^*,\vec{y}^*))<0$.
\end{itemize}
\end{definition}


\subsection{Formal Statement of Results}\label{sec:results}

We present our main results for GDA and OGDA, to be proven in Sections~\ref{sec:GDA} and~\ref{sec:OGDA}. Some of our claims make use of the following assumptions about the objective function $f$ of~\eqref{eq:minmax}:
\begin{assumption}[Invertibility of Hessian of $f$]\label{ass:invertible}
$\nabla^2 f$ (the Hessian of $f$) is invertible for all $\vec{x},\vec{y}$.
\end{assumption}

\begin{assumption}[Non-Imaginary GDA at a Critical Point]\label{ass:non-imaginary}
GDA is {\em non-imaginary} at a critical point $(x^*,y^*)$ of $f$ iff
\begin{equation}\label{eq:Hmain}
  H = \left(\begin{array}{cc}
 - \nabla^2_{\vec{x}\vec{x}} f & - \nabla^2_{\vec{x}\vec{y}} f
\\ \nabla^2_{\vec{y}\vec{x}} f &  \nabla^2_{\vec{y}\vec{y}} f
\end{array}\right)
\end{equation} has no eigenvalue whose real part is $0$. $H$ captures the difference $\frac{1}{\alpha} (J(\vec{x}^*,\vec{y}^*) - \vec{I}_{n+m})$ where $J$ is the Jacobian of GDA dynamics and $\vec{I}_{n+m}$ the identity matrix.
\end{assumption}

\begin{remark} To illustrate the nature of the above assumptions, we note that Assumption \ref{ass:invertible} is generically true for quadratic functions. Take an arbitrary quadratic function $f(\vec{x}) = \frac{1}{2} \vec{x}^{\top}Q\vec{x}$, and define $\tilde{f}(\vec{x}) = f(\vec{x}) + \frac{1}{2}\vec{x}^{\top}A\vec{x}$ where $A$ is a matrix with random entries from some continuous distribution (say uniform in $[-\epsilon,\epsilon]$ for $\epsilon$ small enough). It is not hard to see that $\nabla^2 \tilde{f}$ is invertible with probability one. This is intuitively a ``hyperbolicity" assumption of the fixed points of the dynamics. We note that we use this assumption for Lemma \ref{lem:ogdadiffeomorphism} and also to show that OGDA avoids its unstable fixed points. The stability characterizations do not need this assumption. Moreover, we note that Assumption \ref{ass:non-imaginary} is satisfied when critical point $(\vec{x}^*,\vec{y}^*)$ is strongly local min-max.
\end{remark}

Our two main results are stated as follows:
\begin{theorem}[Inclusion] Assume $f$ is twice differentiable and $\nabla f$ is Lipschitz with constant $L$.
\begin{itemize}
\item Let $(\vec{x}^*,\vec{y}^*)$ be a local min max critical point that satisfies Assumption \ref{ass:non-imaginary}. For $\alpha>0$ sufficiently small it holds that $(\vec{x}^*,\vec{y}^*)$ is GDA-stable fixed point. There is a function with critical point $(\vec{x}^*,\vec{y}^*)$ which violates Assumption \ref{ass:non-imaginary}, $(\vec{x}^*,\vec{y}^*)$ is local min-max but not GDA-stable for any $0<\alpha<\frac{1}{L}$ (Lemmas \ref{lem:minmaxGDA}, \ref{lem:real part} and \ref{lem:Hcomplex}).

    Additionally, if $(\vec{x}^*,\vec{y}^*)$ is a strongly local min max critical point then Assumption \ref{ass:non-imaginary} is satisfied and for $\alpha>0$ sufficiently small we get $(\vec{x}^*,\vec{y}^*)$ is GDA-stable (Remark \ref{rem:strongly}).

    Finally there is a function with a critical point $(\vec{x}^*,\vec{y}^*)$ which is not local min-max but it is GDA-stable (for sufficiently small $\alpha>0$, Lemma \ref{lem:converse}).

\item Let $(\vec{x}^*,\vec{y}^*)$ be a GDA-stable fixed point. For $0<\alpha < \frac{1}{2L}$ it holds that $(\vec{x}^*,\vec{y}^*)$ is OGDA-stable. Moreover the inclusion is strict, i.e., there is a function with critical point $(\vec{x}^*,\vec{y}^*)$ which is OGDA-stable but not GDA-stable (for small enough $\alpha>0$, Lemmas \ref{lem:ogdastability} and \ref{lem:last}).
\end{itemize}
\end{theorem}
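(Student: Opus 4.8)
The plan is to reduce, in both regimes, the (linear) stability question to the spectrum of the matrix $H$ of Assumption~\ref{ass:non-imaginary}: it is the common ``symbol'' of the two linearizations. At a critical point the GDA Jacobian is $J_{\mathrm{GDA}}=\vec{I}_{n+m}+\alpha H$, while if we rewrite the memoryful OGDA iteration~\eqref{eq:OGDA} as a one-step map on the quadruple $(\vec{x}_t,\vec{y}_t,\vec{x}_{t-1},\vec{y}_{t-1})$, its Jacobian at the fixed point $(\vec{x}^*,\vec{y}^*,\vec{x}^*,\vec{y}^*)$ is
\[
J_{\mathrm{OGDA}}=\begin{pmatrix} \vec{I}_{n+m}+2\alpha H & -\alpha H\\ \vec{I}_{n+m} & \vec{0}_{(n+m)\times(n+m)}\end{pmatrix}.
\]
So everything comes down to locating $1+\alpha\mu$, respectively the roots of a quadratic built from $\mu$, relative to the unit disk, as $\mu$ ranges over $\mathrm{sp}(H)$. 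Two algebraic identities are used throughout: $H=\mathrm{diag}(-\vec{I}_n,\vec{I}_m)\,\nabla^2 f$, so that $\|H\|=\|\nabla^2 f\|\le L$ and hence $|\mu|\le L$ for every $\mu\in\mathrm{sp}(H)$; and $H+H^{\top}=\mathrm{diag}\!\left(-2\nabla^2_{\vec{x}\vec{x}}f,\ 2\nabla^2_{\vec{y}\vec{y}}f\right)$.

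For the first bullet (GDA), the key structural fact (Lemma~\ref{lem:minmaxGDA}) is that at a local min-max point $\vec{x}\mapsto f(\vec{x},\vec{y}^*)$ has a local minimum and $\vec{y}\mapsto f(\vec{x}^*,\vec{y})$ a local maximum, so $\nabla^2_{\vec{x}\vec{x}}f\succeq 0$ and $\nabla^2_{\vec{y}\vec{y}}f\preceq 0$, whence $H+H^{\top}\preceq 0$; evaluating on a unit eigenvector $\vec{v}$ of $H$ with eigenvalue $\mu$ gives $\mathrm{Re}\,\mu=\vec{v}^{*}\tfrac12(H+H^{\top})\vec{v}\le 0$. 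Assumption~\ref{ass:non-imaginary} forbids $\mathrm{Re}\,\mu=0$, so $\mathrm{Re}\,\mu<0$ for all $\mu$, and then $|1+\alpha\mu|^{2}=1+2\alpha\,\mathrm{Re}\,\mu+\alpha^{2}|\mu|^{2}<1$ once $\alpha<2\min_{\mu}(-\mathrm{Re}\,\mu)/\max_{\mu}|\mu|^{2}$, i.e.\ the point is GDA-stable (Lemma~\ref{lem:real part}); if it is strongly local min-max the two blocks are strictly definite, so $H+H^{\top}\prec 0$ and Assumption~\ref{ass:non-imaginary} holds automatically (Remark~\ref{rem:strongly}). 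For the separations, I would take $f(x,y)=xy$ at the origin ($L=1$, $H=\bigl(\begin{smallmatrix}0&-1\\1&0\end{smallmatrix}\bigr)$, $\mathrm{sp}(H)=\{\pm i\}$), which is trivially local min-max but has $\rho(J_{\mathrm{GDA}})=\sqrt{1+\alpha^{2}}>1$ for every $0<\alpha<1/L$, violating Assumption~\ref{ass:non-imaginary} (Lemma~\ref{lem:Hcomplex}); and $f(x,y)=-\tfrac12x^{2}+2xy-y^{2}$ at the origin, where $H=\bigl(\begin{smallmatrix}1&-2\\2&-2\end{smallmatrix}\bigr)$ has eigenvalues $\tfrac{-1\pm i\sqrt7}{2}$ with real part $-\tfrac12$, hence GDA-stable for small $\alpha$, yet $\nabla^2_{xx}f=-1\not\succeq 0$, so the origin is not local min-max (Lemma~\ref{lem:converse}).

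For the second bullet (OGDA), I would first set up the eigenvalue dictionary. If $\lambda$ is an eigenvalue of $J_{\mathrm{OGDA}}$ with eigenvector $(\vec{u},\vec{v})$, the bottom block gives $\vec{u}=\lambda\vec{v}$, forcing $\vec{v}\neq\vec{0}$; plugging into the top block yields $\alpha(2\lambda-1)H\vec{v}=\lambda(\lambda-1)\vec{v}$. Since $\lambda=\tfrac12$ would force $\vec{v}=\vec{0}$, $\vec{v}$ is an eigenvector of $H$ for some $\mu\in\mathrm{sp}(H)$ and $\lambda$ solves $\lambda^{2}-(1+2\alpha\mu)\lambda+\alpha\mu=0$; this bounds $\rho(J_{\mathrm{OGDA}})$ even when $H$ is not diagonalizable, since only the spectra enter. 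If $(\vec{x}^*,\vec{y}^*)$ is GDA-stable, then by the analysis above every $\mu\in\mathrm{sp}(H)$ has $\mathrm{Re}\,\mu<0$ or $\mu=0$; setting $\omega=\alpha\mu$ we get $\mathrm{Re}\,\omega\le 0$ and, for $\alpha<\tfrac1{2L}$, $|\omega|\le\alpha L<\tfrac12$. Applying the degree-two Schur--Cohn test to $\lambda^{2}-(1+2\omega)\lambda+\omega$ needs $|\omega|\le 1$ (clear) and $|1+\omega-2|\omega|^{2}|\le 1-|\omega|^{2}$; squaring, the latter is equivalent to
\[
|\omega|^{2}\bigl(1-3|\omega|^{2}\bigr)-2\,\mathrm{Re}(\omega)\bigl(1-2|\omega|^{2}\bigr)\ \ge\ 0,
\]
which holds because $|\omega|<\tfrac12$ makes $1-3|\omega|^{2}$ and $1-2|\omega|^{2}$ positive while $\mathrm{Re}\,\omega\le 0$ (and it is strict unless $\omega=0$, in which case the roots are $0$ and $1$ directly). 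Hence $\rho(J_{\mathrm{OGDA}})\le 1$, i.e.\ the point is OGDA-stable (Lemma~\ref{lem:ogdastability}). Strictness uses $f(x,y)=xy$ once more: for $0<\alpha<\tfrac12=\tfrac1{2L}$, $\mu=\pm i$ gives $\omega=\pm\alpha i$ and $\lambda_{\pm}=\tfrac12\bigl(1+2\omega\pm\sqrt{1-4\alpha^{2}}\bigr)$, so $|\lambda_{\pm}|^{2}=\tfrac12\bigl(1\pm\sqrt{1-4\alpha^{2}}\bigr)<1$; the origin is OGDA-stable although it is GDA-unstable (Lemma~\ref{lem:last}).

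The main obstacle is the OGDA inequality — propagating ``$\mathrm{Re}\,\mu\le 0$ and $\alpha<\tfrac1{2L}$'' to ``$\rho(J_{\mathrm{OGDA}})\le 1$'' uniformly over $\mathrm{sp}(H)$. The right instrument is a root-location test for quadratics with \emph{complex} coefficients (Schur--Cohn/Jury), and the bookkeeping that reduces the resulting scalar inequality to exactly the two facts $\mathrm{Re}\,\omega\le 0$ and $|\omega|<\tfrac12$ is the delicate step; some further care is needed to make the $\mu\leftrightarrow\lambda$ correspondence watertight without a diagonalizability hypothesis, to justify $\|H\|\le L$, and to check the ``not local min-max'' and ``trivially local min-max'' claims for the quadratic witnesses.
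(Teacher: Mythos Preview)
Your strategy coincides with the paper's: both reduce GDA and OGDA stability to the spectrum of $H$, obtain $\mathrm{Re}\,\mu\le 0$ at a local min-max from the symmetric part $\tfrac12(H+H^{\top})$ (the paper phrases this via the Ky Fan majorization inequality, but the content is exactly your eigenvector identity $\mathrm{Re}\,\mu=\vec v^{*}\tfrac12(H+H^{\top})\vec v$), and derive the same quadratic $\lambda^{2}-(1+2\alpha\mu)\lambda+\alpha\mu=0$ linking $\mathrm{sp}(J_{\mathrm{OGDA}})$ to $\mathrm{sp}(H)$ (the paper gets it by row/column operations on the characteristic polynomial, you by the block-eigenvector equation; both avoid diagonalizability). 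Two differences in execution are worth noting. First, for the OGDA root-location step the paper simply feeds the hypotheses $|r|<\tfrac12$ and $|1+r|<1$ into Mathematica's \texttt{Reduce}, whereas your Schur--Cohn reduction to $|\omega|^{2}(1-3|\omega|^{2})-2\,\mathrm{Re}(\omega)(1-2|\omega|^{2})\ge 0$ is self-contained and in fact succeeds under the weaker pair $|\omega|<\tfrac12$, $\mathrm{Re}\,\omega\le 0$. Second, for the ``GDA-stable but not local min-max'' witness the paper uses $f(x,y)=-\tfrac18x^{2}-\tfrac12y^{2}+\tfrac{6}{10}xy$, chosen so that $H$ has \emph{real} eigenvalues (matching the hypothesis of their Lemma~\ref{lem:minmaxGDA}), rather than your $-\tfrac12x^{2}+2xy-y^{2}$; both are valid for the theorem as stated.

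One small wrinkle to tighten: when you pass from ``$(\vec{x}^{*},\vec{y}^{*})$ is GDA-stable'' to ``$\mathrm{Re}\,\mu<0$ or $\mu=0$'' you write ``by the analysis above,'' but the analysis above was the $H+H^{\top}\preceq 0$ argument, which uses the local min-max structure and need not hold at a generic GDA-stable point. The correct (and immediate) justification is straight from $|1+\alpha\mu|\le 1$: this gives $2\,\mathrm{Re}\,\mu+\alpha|\mu|^{2}\le 0$, hence $\mathrm{Re}\,\mu\le 0$, with equality forcing $\mu=0$.
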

\begin{theorem}[Avoid unstable] Assume $f$ is twice differentiable and $\nabla f$ is Lipschitz with constant $L$. The set of initial vectors $(\vec{x}_0,\vec{y}_0)$ so that GDA converges to (linearly) GDA-unstable fixed points (critical points) is of measure zero. Under Assumption \ref{ass:invertible}, the set of initial vectors $(\vec{x}_1,\vec{y}_1, \vec{x}_0,\vec{y}_0)$ so that OGDA converges to (linearly) OGDA-unstable fixed points (critical points) is of measure zero. These statements are captured by Theorems \ref{thm:measure} and \ref{thm:measure2}.
\end{theorem}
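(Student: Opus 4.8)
The plan is to run the standard ``avoid unstable fixed points via the Center--Stable Manifold Theorem'' argument (Theorem~\ref{thm:manifold}), in the spirit of \cite{MPP15, LPPSJR17}, simultaneously for GDA and for a dynamical system that captures OGDA. The skeleton is the same in both cases: (i) show the update map is a $C^1$ local diffeomorphism of the relevant Euclidean space; (ii) around each linearly unstable fixed point, use Theorem~\ref{thm:manifold} to trap all nearby forward orbits inside a center--stable manifold of dimension strictly smaller than the ambient dimension, hence of measure zero; (iii) observe that any initialization whose orbit converges to an unstable fixed point must, after finitely many steps, enter and stay in such a neighborhood, so it lies in a preimage (under an iterate of the map) of one of these null manifolds; (iv) conclude by a countable-cover argument that the entire ``bad'' set is a countable union of null sets.

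For GDA, the map is $g(\vec{x},\vec{y}) = (\vec{x}-\alpha\nabla_{\vec{x}}f,\ \vec{y}+\alpha\nabla_{\vec{y}}f)$, with Jacobian $\vec{I}_{n+m} + \alpha H$ at every point, where $H$ is the matrix~\eqref{eq:Hmain} evaluated there. Since $\nabla f$ is $L$-Lipschitz, $\nabla^2 f$ has operator norm at most $L$; as $H$ is obtained from $\nabla^2 f$ by negating one block-row, $\|H\|\le L$ as well, so $\vec{I}_{n+m}+\alpha H$ is invertible for $\alpha<1/L$, making $g$ a local diffeomorphism. For OGDA I would use the lifted system~\eqref{eq:mysystem} on $\mathbb{R}^{2n+2m}$ sending $(\vec{x}_t,\vec{y}_t,\vec{x}_{t-1},\vec{y}_{t-1})$ to $(\vec{x}_{t+1},\vec{y}_{t+1},\vec{x}_t,\vec{y}_t)$; its Jacobian has a two-by-two block form whose lower-left block is the identity and whose upper-right block is $-\alpha H$ evaluated at the ``previous'' point, so its determinant is, up to sign, $\alpha^{n+m}\det(\nabla^2 f)$ there --- nonzero under Assumption~\ref{ass:invertible}. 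This is exactly the content of Lemma~\ref{lem:ogdadiffeomorphism}, and it explains why Assumption~\ref{ass:invertible} is needed only for the OGDA half of the statement. By Remark~\ref{rem:fixedpoints} the fixed points of the lifted system are the quadruples $(\vec{x},\vec{y},\vec{x},\vec{y})$ with $\nabla f(\vec{x},\vec{y})=\vec{0}$, and via the eigenvalue correspondence a critical point is OGDA-unstable precisely when the lifted Jacobian there has spectral radius exceeding $1$.

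With the local-diffeomorphism property in hand, fix a linearly unstable fixed point $\vec{z}$ (of $g$, resp. of the lifted map), so its Jacobian has an eigenvalue of modulus $>1$ and thus a nontrivial unstable subspace. Theorem~\ref{thm:manifold} supplies an open neighborhood $B_{\vec{z}}$ and a $C^1$ embedded disc $W_{\vec{z}}\subseteq B_{\vec{z}}$ of dimension less than the ambient dimension, such that every point whose entire forward orbit lies in $B_{\vec{z}}$ belongs to $W_{\vec{z}}$; in particular $W_{\vec{z}}$ is Lebesgue-null. If the orbit of $p$ converges to $\vec{z}$, then $g^t(p)\in B_{\vec{z}}$ for all $t\ge T$, so $g^T(p)\in W_{\vec{z}}$, i.e. $p\in g^{-T}(W_{\vec{z}})$; and since $g$ is a $C^1$ local diffeomorphism, $g^{-T}$ carries null sets to null sets, so $g^{-T}(W_{\vec{z}})$ is null. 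Finally, since any subset of Euclidean space is Lindelöf, the cover $\{B_{\vec{z}}\}$ of the set of unstable fixed points has a countable subcover $\{B_{\vec{z}_i}\}$; for any $p$ whose orbit converges to some unstable $\vec{z}$, choosing $i$ with $\vec{z}\in B_{\vec{z}_i}$ and $T$ large enough that the whole tail of the orbit sits in $B_{\vec{z}_i}$ gives $p\in g^{-T}(W_{\vec{z}_i})$. Hence the bad set is contained in $\bigcup_i\bigcup_{T\ge 0} g^{-T}(W_{\vec{z}_i})$, a countable union of null sets, so it has measure zero. Running this for $g$ gives Theorem~\ref{thm:measure}, and for the lifted OGDA system gives Theorem~\ref{thm:measure2}.

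The main obstacle is step (i), the local-diffeomorphism claim, and it is where the two dynamics genuinely diverge: for GDA it reduces to the elementary bound $\|\alpha H\|\le \alpha L<1$, whereas for OGDA the Jacobian of the lifted system is not obviously invertible and one must exploit its block structure, which is exactly what forces Assumption~\ref{ass:invertible}. Two smaller points also need care: that ``the orbit converges to a fixed point'' really does push the tail of the orbit into the neighborhood handed to us by Theorem~\ref{thm:manifold} (immediate from convergence plus openness), and that the projection $(\vec{x},\vec{y},\vec{x},\vec{y})\mapsto(\vec{x},\vec{y})$ of Remark~\ref{rem:fixedpoints}, together with the eigenvalue correspondence, correctly matches the measure-zero set obtained in $\mathbb{R}^{2n+2m}$ with the statement about OGDA-unstable critical points.
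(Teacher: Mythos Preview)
Your proposal is correct and follows essentially the same approach as the paper's own proof. The paper likewise invokes Lemmas~\ref{lem:GDAdiffeomorphism} and~\ref{lem:ogdadiffeomorphism} for step~(i), applies Theorem~\ref{thm:manifold} at each unstable fixed point, extracts a countable subcover via second-countability (your Lindel\"of argument is equivalent in Euclidean space), and concludes by showing the bad set is contained in a countable union of preimages of the null local center--stable manifolds; the only cosmetic difference is that your determinant computation for $J_{\textrm{OGDA}}$ (reading off $\pm\alpha^{n+m}\det(\nabla^2 f)$ from the block structure) is slightly more direct than the paper's row/column reduction to the matrix $H_{\textrm{OGDA}}$.
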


\section{Analysis of Gradient Descent/Ascent}
\label{sec:GDA}
In this section we analyze the local behavior (which carries over to a global characterization under Lemma \ref{lem:GDAdiffeomorphism} and Center-stable manifold theorem \ref{thm:manifold}) of GDA dynamics (\ref{eq:gradient}).
In all our statements (theorems, lemmas etc) we work with real-valued function $f$ that is twice differentiable and we also assume $\nabla f$ is Lipschitz with constant $L$ and that the stepsize satisfies $0<\alpha < \frac{1}{L}$ (unless stated otherwise in the statement of a lemma/theorem).

\subsection{Analyzing GDA}
We need to show the following lemma in order to use the stable manifold theorem (see Theorem \ref{thm:manifold}).
\begin{lemma}[\textbf{GDA is a local diffeomorphism}]\label{lem:GDAdiffeomorphism}
Let $f$ be twice differentiable and $\nabla f$ is Lipschitz with constant $L$. Assume that $0<\alpha < \frac{1}{L}$. The update rule of the GDA dynamics (\ref{eq:gradient}) is a local diffeomorphism.
\end{lemma}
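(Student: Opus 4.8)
The plan is to invoke the inverse function theorem. Write the GDA update map as $g(\vec{x},\vec{y}) = \bigl(\vec{x} - \alpha\nabla_{\vec{x}}f(\vec{x},\vec{y}),\ \vec{y} + \alpha\nabla_{\vec{y}}f(\vec{x},\vec{y})\bigr)$. Since $f\in C^2$, the map $g$ is $C^1$, so it is enough to show that its Jacobian $J_g$ is invertible at every point: the inverse function theorem then provides, around each point, an open neighborhood on which $g$ is a diffeomorphism onto its image with $C^1$ inverse, which is precisely the assertion that $g$ is a local diffeomorphism.

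First I would compute the Jacobian. Differentiating and using symmetry of second partials, one obtains
\[
J_g(\vec{x},\vec{y}) \;=\; \begin{pmatrix} \vec{I}_n - \alpha\,\nabla^2_{\vec{x}\vec{x}}f & -\alpha\,\nabla^2_{\vec{x}\vec{y}}f \\[2pt] \alpha\,\nabla^2_{\vec{y}\vec{x}}f & \vec{I}_m + \alpha\,\nabla^2_{\vec{y}\vec{y}}f \end{pmatrix} \;=\; \vec{I}_{n+m} + \alpha H ,
\]
where $H$ is the matrix of Assumption~\ref{ass:non-imaginary} evaluated at $(\vec{x},\vec{y})$. Observe that $H = D\,\nabla^2 f$ with $D = \mathrm{diag}(-\vec{I}_n,\vec{I}_m)$ and $\|D\| = 1$ in spectral norm, so $\|H\| \le \|\nabla^2 f\|$. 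The hypothesis that $\nabla f$ is $L$-Lipschitz, together with $f\in C^2$, forces $\|\nabla^2 f(\vec{x},\vec{y})\| \le L$ at every point; hence $\|\alpha H\| \le \alpha L < 1$ by the standing assumption $\alpha < 1/L$. Consequently $-1$ is not an eigenvalue of $\alpha H$, so $J_g = \vec{I}_{n+m} + \alpha H$ is invertible (with inverse $\sum_{k\ge 0}(-\alpha H)^k$), and this holds uniformly over all of $\mathbb{R}^{n+m}$.

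Combining the two steps gives the lemma. As a remark, the same contraction estimate yields a bit more: writing $g = \mathrm{id} + \alpha D\nabla f$, one gets $\norm{g(\vec{z}_1)-g(\vec{z}_2)} \ge (1-\alpha L)\,\norm{\vec{z}_1-\vec{z}_2}$ for all $\vec{z}_1,\vec{z}_2$, so $g$ is in fact globally injective and hence a diffeomorphism onto its image, though only the local statement is needed for the subsequent stable-manifold argument. There is no genuine obstacle in this proof; the only point worth stating carefully is the passage from the Lipschitz bound on $\nabla f$ to the uniform operator-norm bound $\|\nabla^2 f\| \le L$, together with the elementary observation that the sign flips in $H$ relative to $\nabla^2 f$ do not change its norm, so the margin $\alpha L < 1$ is exactly what makes $J_g$ nonsingular everywhere.
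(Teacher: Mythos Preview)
Your proof is correct and follows essentially the same route as the paper: compute $J_g = \vec{I}_{n+m} + \alpha H$ with $H = \mathrm{diag}(-\vec{I}_n,\vec{I}_m)\,\nabla^2 f$, use the Lipschitz bound on $\nabla f$ to get $\|\nabla^2 f\|\le L$ and hence $\|\alpha H\|<1$, and invoke the inverse function theorem. Your additional remark about global injectivity via the contraction estimate is a nice observation that the paper does not include.
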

\begin{proof}
Let $h(\vec{x},\vec{y})$ be the update rule of the dynamics (\ref{eq:gradient}). It suffices to show that the Jacobian $J_{\textrm{GDA}}$ of $h$ is invertible and by the use of Inverse Function theorem, the claim follows. After straightforward calculations we get
\begin{equation}\label{eq:Jh}
J_{\textrm{GDA}} = \left(\begin{array}{cc}
\vec{I}_n - \alpha\nabla^2_{\vec{x}\vec{x}} f & - \alpha\nabla^2_{\vec{x}\vec{y}} f
\\ \alpha\nabla^2_{\vec{y}\vec{x}} f & \vec{I}_m + \alpha\nabla^2_{\vec{y}\vec{y}} f
\end{array}\right),
\end{equation}
where the Hessian of $f$ is given by
\begin{equation}\label{eq:hessian}
\nabla^2 f = \left(\begin{array}{cc}
\nabla^2_{\vec{x}\vec{x}} f & \nabla^2_{\vec{x}\vec{y}} f
\\ \nabla^2_{\vec{y}\vec{x}} f & \nabla^2_{\vec{y}\vec{y}} f
\end{array}\right).
\end{equation}
It suffices to show that the matrix below does not have an eigenvalue that is equal to $-1/\alpha$ (by just subtracting the identity matrix),
\begin{equation}\label{eq:H}
H_{\textrm{GDA}} = \left(\begin{array}{cc}
 - \nabla^2_{\vec{x}\vec{x}} f & - \nabla^2_{\vec{x}\vec{y}} f
\\ \nabla^2_{\vec{y}\vec{x}} f &  \nabla^2_{\vec{y}\vec{y}} f
\end{array}\right).
\end{equation}
It is easy to see that
\begin{equation}\label{eq:HH}
H_{\textrm{GDA}} = \left(\begin{array}{cc}
 - \vec{I}_n & \vec{0}_{n\times m}
\\ \vec{0}_{m\times n} &  \vec{I}_{m}
\end{array}\right) \left (\nabla^2 f \right ).
\end{equation}
If the function $\nabla f$ is L-Lipschitz, it follows that $\norm[2]{\nabla^2 f} \leq L$ (Lemma 6 in \cite{PP17}). Therefore by equation (\ref{eq:HH}) we have that $\rho (H_{\textrm{GDA}}) \leq \norm[2]{H_{\textrm{GDA}}} \leq \norm[2]{\nabla^2 f} \leq L < \frac{1}{\alpha}$. The claim follows.
\end{proof}

\begin{theorem}[\textbf{Measure zero for GDA}]\label{thm:measure} Let $f$ be twice differentiable and $\nabla f$ is Lipschitz with constant $L$. Assume that $0<\alpha < \frac{1}{L}$ and let $h$ be the update rule of the GDA dynamics (\ref{eq:gradient}), $(\vec{x}^*,\vec{y}^*)$ be a GDA-unstable critical point and $W_{\textrm{GDA}}(x^*,y^*)$ be its stable set, i.e.,
\begin{align*}
W_{\textrm{GDA}}(x^*,y^*)= \{ (x_0,y_0): \lim_k h^k (x_0,y_0) =(x^*,y^*) \}.
\end{align*}
It holds that $W_{\textrm{GDA}}(x^*,y^*)$ is of Lebesgue measure zero. Moreover if $W_{\textrm{GDA}}$ is union of the stable sets of all GDA-unstable critical points, then $W_{\textrm{GDA}}$ has also measure zero (namely the proof works for non-isolated critical points).
\end{theorem}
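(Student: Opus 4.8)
The plan is to combine the Center--Stable Manifold theorem (Theorem~\ref{thm:manifold}) with the fact, established in Lemma~\ref{lem:GDAdiffeomorphism}, that the GDA update map $h$ is a local diffeomorphism, following the template of \cite{LPPSJR17, MPP15}. First I would fix a GDA-unstable critical point $\vec{z}^*=(\vec{x}^*,\vec{y}^*)$. By the definition of instability $\rho(J_{\textrm{GDA}}(\vec z^*))>1$, so the Jacobian has at least one eigenvalue strictly outside the closed unit disk; hence the unstable generalized eigenspace $E^u$ has dimension at least $1$, and the center--stable subspace $E^{cs}=E^s\oplus E^c$ has dimension at most $n+m-1$. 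Applying Theorem~\ref{thm:manifold} at $\vec z^*$ yields an open neighborhood $B_{\vec z^*}$ of $\vec z^*$ and a $C^1$ embedded disk $W^{cs}_{\mathrm{loc}}(\vec z^*)\subseteq B_{\vec z^*}$ of dimension $\dim E^{cs}\le n+m-1$, with the property that every point of $B_{\vec z^*}$ whose entire forward orbit under $h$ remains in $B_{\vec z^*}$ lies in $W^{cs}_{\mathrm{loc}}(\vec z^*)$. Being an embedded submanifold of dimension strictly less than $n+m$, $W^{cs}_{\mathrm{loc}}(\vec z^*)$ has Lebesgue measure zero.

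Next I would upgrade this local statement to the global stable set. If $(\vec x_0,\vec y_0)\in W_{\textrm{GDA}}(\vec z^*)$ then $h^k(\vec x_0,\vec y_0)\to\vec z^*$, so there is $k_0$ with $h^k(\vec x_0,\vec y_0)\in B_{\vec z^*}$ for all $k\ge k_0$; in particular the whole forward orbit of $h^{k_0}(\vec x_0,\vec y_0)$ stays in $B_{\vec z^*}$, so $h^{k_0}(\vec x_0,\vec y_0)\in W^{cs}_{\mathrm{loc}}(\vec z^*)$, i.e. $(\vec x_0,\vec y_0)\in h^{-k_0}\!\left(W^{cs}_{\mathrm{loc}}(\vec z^*)\right)$. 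Therefore
\[
W_{\textrm{GDA}}(\vec z^*)\ \subseteq\ \bigcup_{k\ge 0} h^{-k}\!\left(W^{cs}_{\mathrm{loc}}(\vec z^*)\right).
\]
Now I invoke Lemma~\ref{lem:GDAdiffeomorphism}: since $h$ is a $C^1$ map with everywhere-invertible Jacobian, $\mathbb{R}^{n+m}$ is covered by countably many open sets (using second countability) on each of which $h$ restricts to a diffeomorphism onto its open image, and a $C^1$ map sends null sets to null sets; hence the preimage under $h$ of a measure-zero set is measure zero, and inductively so is $h^{-k}(W^{cs}_{\mathrm{loc}}(\vec z^*))$ for every $k$. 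A countable union of null sets being null, $W_{\textrm{GDA}}(\vec z^*)$ has measure zero.

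Finally, for $W_{\textrm{GDA}}$, the union over \emph{all} GDA-unstable critical points (which may be uncountable when critical points are non-isolated), I would use the Lindel\"of property of $\mathbb{R}^{n+m}$: the family $\{B_{\vec z^*}\}$ over all GDA-unstable critical points $\vec z^*$ has a countable subfamily $\{B_{\vec z^*_i}\}_{i\in\mathbb{N}}$ still covering the set of GDA-unstable critical points. If a trajectory converges to some GDA-unstable critical point $\vec z^*$, then $\vec z^*\in B_{\vec z^*_i}$ for some $i$, and since $B_{\vec z^*_i}$ is open the tail of the trajectory lies in $B_{\vec z^*_i}$; arguing exactly as above, the initial point lies in $\bigcup_{k\ge 0} h^{-k}(W^{cs}_{\mathrm{loc}}(\vec z^*_i))$. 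Hence $W_{\textrm{GDA}}\subseteq\bigcup_{i\in\mathbb{N}}\bigcup_{k\ge 0} h^{-k}(W^{cs}_{\mathrm{loc}}(\vec z^*_i))$, a countable union of null sets, hence null.

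I expect the two delicate points to be: (i) justifying that a \emph{local} (rather than global) diffeomorphism suffices to pull back null sets — handled by the countable-cover argument above, which crucially uses that $h$ is $C^1$ with nonsingular Jacobian everywhere by Lemma~\ref{lem:GDAdiffeomorphism}; and (ii) the passage from isolated to non-isolated unstable critical points, where one must ensure that every trajectory converging to an unstable point is captured by one of only countably many local center--stable manifolds — this is exactly what the Lindel\"of argument achieves, since convergence forces the trajectory's tail into one fixed neighborhood $B_{\vec z^*_i}$.
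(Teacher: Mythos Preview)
Your proposal is correct and follows essentially the same approach as the paper's own proof: both combine the Center--Stable Manifold theorem with Lemma~\ref{lem:GDAdiffeomorphism}, use second countability/Lindel\"of to reduce to countably many neighborhoods $B_{\vec z^*_i}$, trap the tail of any convergent trajectory in one such neighborhood (hence in the corresponding codimension-$\ge 1$ local center--stable manifold), and conclude by pulling back null sets through the local diffeomorphism $h$ and taking a countable union. If anything, your write-up is slightly more explicit than the paper's (which outsources the ``preimage of a null set under a local diffeomorphism is null'' step to a lemma in \cite{LPPSJR17}), but the architecture is identical.
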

The following corollary is immediate from Theorem \ref{thm:measure}.
\begin{corollary} Let $(\vec{x}^*,\vec{y}^*)$ be GDA-unstable. Assume $\mu$ is a measure of the starting points $(\vec{x}_0,\vec{y}_0)$ and is absolutely continuous with respect to the Lebesgue measure on $\mathbb{R}^{n+m}$. Then it holds that \[\textrm{Pr}[\lim_t(\vec{x}_t,\vec{y}_t) = (\vec{x}^*,\vec{y}^*)]=0.\]
\end{corollary}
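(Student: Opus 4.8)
The plan is to invoke the Center--Stable Manifold Theorem~\ref{thm:manifold} at each GDA-unstable critical point, using the fact (Lemma~\ref{lem:GDAdiffeomorphism}) that the GDA update rule $h$ is a $C^1$ local diffeomorphism on all of $\mathbb{R}^{n+m}$ whenever $0<\alpha<\frac1L$. First I would fix a GDA-unstable critical point $(\vec{x}^*,\vec{y}^*)$. By definition $\rho(J_{\textrm{GDA}}(\vec{x}^*,\vec{y}^*))>1$, so the Jacobian has at least one eigenvalue of modulus strictly greater than $1$; hence the direct sum $E^{cs}$ of the generalized eigenspaces associated to eigenvalues of modulus $\le 1$ is a \emph{proper} subspace, of dimension $d\le n+m-1$. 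Theorem~\ref{thm:manifold} then provides an open neighborhood $B$ of $(\vec{x}^*,\vec{y}^*)$ and a $C^1$ embedded disk $W^{cs}_{\mathrm{loc}}$ of dimension $d$, tangent to $E^{cs}$ at $(\vec{x}^*,\vec{y}^*)$, satisfying $\bigcap_{k\ge 0}h^{-k}(B)\subseteq W^{cs}_{\mathrm{loc}}$. Being an embedded submanifold of dimension $<n+m$, $W^{cs}_{\mathrm{loc}}$ has Lebesgue measure zero.

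Next I would capture the stable set inside countably many preimages of this null set. If $(\vec{x}_0,\vec{y}_0)\in W_{\textrm{GDA}}(\vec{x}^*,\vec{y}^*)$ then $h^k(\vec{x}_0,\vec{y}_0)\to(\vec{x}^*,\vec{y}^*)$, so there is $T$ with $h^k(\vec{x}_0,\vec{y}_0)\in B$ for all $k\ge T$; equivalently $h^T(\vec{x}_0,\vec{y}_0)\in\bigcap_{k\ge 0}h^{-k}(B)\subseteq W^{cs}_{\mathrm{loc}}$, i.e. $(\vec{x}_0,\vec{y}_0)\in h^{-T}(W^{cs}_{\mathrm{loc}})$. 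Thus $W_{\textrm{GDA}}(\vec{x}^*,\vec{y}^*)\subseteq\bigcup_{t\ge 0}h^{-t}(W^{cs}_{\mathrm{loc}})$. It then remains to check that each $h^{-t}(W^{cs}_{\mathrm{loc}})$ is null. Since $h$ is a $C^1$ local diffeomorphism, every point has a neighborhood on which $h$ restricts to a diffeomorphism; because a $C^1$ (in fact locally Lipschitz) map sends null sets to null sets, on each such neighborhood $h^{-1}$ of a null set is null, and covering $\mathbb{R}^{n+m}$ by countably many such neighborhoods (second countability) shows that $h^{-1}$ of a null set is null. Induction on $t$ together with a countable union then gives that $W_{\textrm{GDA}}(\vec{x}^*,\vec{y}^*)$ has measure zero, proving the first claim.

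For the last assertion --- the union $W_{\textrm{GDA}}$ over all, possibly non-isolated, GDA-unstable critical points --- I would run the same construction at every such point $z$ to obtain an open cover $\{B_z\}$ of the set of GDA-unstable critical points, each $B_z$ carrying a measure-zero local center-stable disk $W^{cs}_{\mathrm{loc}}(z)$ with $\bigcap_{k\ge 0}h^{-k}(B_z)\subseteq W^{cs}_{\mathrm{loc}}(z)$. By the Lindel\"of property of $\mathbb{R}^{n+m}$ one extracts a countable subcover $\{B_{z_i}\}_{i\in\mathbb{N}}$. Given $(\vec{x}_0,\vec{y}_0)\in W_{\textrm{GDA}}$, its orbit converges to some GDA-unstable critical point $z$, and $z\in B_{z_i}$ for some $i$; since $B_{z_i}$ is open the orbit eventually remains in $B_{z_i}$, so by the argument of the previous paragraph $(\vec{x}_0,\vec{y}_0)\in\bigcup_{t\ge 0}h^{-t}(W^{cs}_{\mathrm{loc}}(z_i))$. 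Hence $W_{\textrm{GDA}}\subseteq\bigcup_{i\in\mathbb{N}}\bigcup_{t\ge 0}h^{-t}(W^{cs}_{\mathrm{loc}}(z_i))$, a countable union of measure-zero sets, and is therefore measure zero. The main obstacle is precisely this non-isolated case: the center-stable manifold theorem is purely local, so one must pass to a countable subcover and observe that the key step --- an orbit converging to $z$ eventually stays in whichever neighborhood $B_{z_i}$ happens to contain $z$, even though $z_i$ need not equal $z$ --- still licenses the manifold-theorem conclusion at $z_i$. Everything else is bookkeeping, plus the elementary fact that a local diffeomorphism pulls null sets back to null sets and the observation that ``unstable'' forces $E^{cs}$ to have positive codimension.
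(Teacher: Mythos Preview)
Your proposal is correct and follows essentially the same route as the paper: you have in effect reproduced the paper's proof of Theorem~\ref{thm:measure} (local diffeomorphism via Lemma~\ref{lem:GDAdiffeomorphism}, Center--Stable Manifold Theorem~\ref{thm:manifold}, countable subcover via second-countability/Lindel\"of, and preimages of null sets under local diffeomorphisms being null), from which the corollary is declared ``immediate.'' The only step you might make explicit is the last one the paper takes for granted: once $W_{\textrm{GDA}}(\vec{x}^*,\vec{y}^*)$ is Lebesgue-null, absolute continuity of $\mu$ gives $\textrm{Pr}[\lim_t(\vec{x}_t,\vec{y}_t)=(\vec{x}^*,\vec{y}^*)]=\mu\big(W_{\textrm{GDA}}(\vec{x}^*,\vec{y}^*)\big)=0$.
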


\subsection{Characterizing GDA-stability}
\begin{lemma}[\textbf{Local min-max are GDA-stable}]\label{lem:minmaxGDA} Assume that $0<\alpha< \frac{1}{L}$ and let $(\vec{x}^*,\vec{y}^*)$ be a local min-max critical point of $f$ and matrix $H$ (see equations (\ref{eq:Hmain})) computed at $(\vec{x}^*,\vec{y}^*)$ has real eigenvalues. It holds that $(\vec{x}^*,\vec{y}^*)$ is GDA-stable.
\end{lemma}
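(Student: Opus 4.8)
I want to show that a local min-max critical point $(\vec{x}^*,\vec{y}^*)$ at which $H$ has real eigenvalues is GDA-stable, i.e. that the Jacobian $J_{\textrm{GDA}}$ at $(\vec{x}^*,\vec{y}^*)$ has spectral radius at most $1$. By equation~(\ref{eq:Jh}) we have $J_{\textrm{GDA}} = \vec{I}_{n+m} + \alpha H$, so the eigenvalues of $J_{\textrm{GDA}}$ are exactly $1 + \alpha \lambda$ where $\lambda$ ranges over eigenvalues of $H$. Since we are told the eigenvalues of $H$ are all real, I must show that each such $\lambda$ satisfies $-2/\alpha \le \lambda \le 0$; then $|1+\alpha\lambda| \le 1$ and we are done. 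The upper bound $\lambda \le 0$ is the real content, coming from the local min-max structure; the lower bound will follow from the Lipschitz/stepsize hypothesis.

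First I would establish the sign condition: if $(\vec{x}^*,\vec{y}^*)$ is local min-max, then $\nabla^2_{\vec{x}\vec{x}} f \succeq 0$ and $\nabla^2_{\vec{y}\vec{y}} f \preceq 0$ at that point (the standard second-order necessary conditions for the inner min and the inner max along coordinate slices). Recall from~(\ref{eq:HH}) that $H = D \cdot \nabla^2 f$ where $D = \mathrm{diag}(-\vec{I}_n, \vec{I}_m)$. The key observation is that $H$ and $(-\nabla^2 f)$ interact through the positive semidefinite "block" structure: writing $P = \begin{pmatrix} \vec{I}_n & \vec{0} \\ \vec{0} & -\vec{I}_m\end{pmatrix} = D$, one checks that $-P\, H = \nabla^2 f$ has block-diagonal part $\mathrm{diag}(\nabla^2_{\vec{x}\vec{x}} f, \nabla^2_{\vec{y}\vec{y}} f)$, and that $H$ is similar to a matrix of the form $\begin{pmatrix} -A & -B \\ B^\top & C\end{pmatrix}$ with $A = \nabla^2_{\vec{x}\vec{x}}f \succeq 0$, $C = \nabla^2_{\vec{y}\vec{y}}f \preceq 0$, $B = \nabla^2_{\vec{x}\vec{y}}f$. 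If $\lambda$ is a real eigenvalue of $H$ with real eigenvector $(\vec{u},\vec{v})$, then expanding $H(\vec{u},\vec{v}) = \lambda(\vec{u},\vec{v})$ and taking the inner product with $(\vec{u}, -\vec{v})$ — i.e. pairing against $D$ applied to the eigenvector — the cross terms $\vec{u}^\top B \vec{v}$ cancel, leaving $\lambda(\norm{\vec{u}}^2 - \norm{\vec{v}}^2) = -\vec{u}^\top A \vec{u} - (-\vec{v}^\top C \vec{v}) = -(\vec{u}^\top A \vec{u} + \vec{v}^\top(-C)\vec{v}) \le 0$. This alone does not pin down the sign of $\lambda$ unless I control $\norm{\vec{u}}^2 - \norm{\vec{v}}^2$, so the cleaner route is to pair $H(\vec{u},\vec{v})=\lambda(\vec{u},\vec{v})$ against $(\vec{u},\vec{v})$ itself after applying $D$: compute $(\vec{u},\vec{v})^\top D H (\vec{u},\vec{v}) = \lambda (\vec{u},\vec{v})^\top D (\vec{u},\vec{v})$, but $DH = -\nabla^2 f$ acting... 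I would instead use that $DH = \begin{pmatrix}\nabla^2_{\vec{x}\vec{x}}f & \nabla^2_{\vec{x}\vec{y}}f \\ -\nabla^2_{\vec{y}\vec{x}}f & -\nabla^2_{\vec{y}\vec{y}}f\end{pmatrix}$ is not symmetric either, so the right pairing is: multiply $\lambda (\vec{u},\vec{v}) = H(\vec{u},\vec{v})$ on the left by $(\vec{u}^\top, -\vec{v}^\top)$, giving $\lambda(\norm{\vec u}^2 - \norm{\vec v}^2) = -\vec u^\top A \vec u - \vec v^\top C \vec v \le 0$; combined with the observation that $H$ has the same spectrum as $-H^\top$ restricted appropriately (or by symmetrizing over a basis of the eigenspace), one concludes $\lambda \le 0$. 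I would present this carefully, possibly by symmetrizing: since $H + \text{(its }D\text{-adjoint)}$ is negative semidefinite, all real eigenvalues are $\le 0$.

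For the lower bound, I invoke Lemma~\ref{lem:GDAdiffeomorphism}'s computation: $\rho(H) \le \norm[2]{H} \le \norm[2]{\nabla^2 f} \le L$, so every real eigenvalue $\lambda$ satisfies $\lambda \ge -L > -1/\alpha > -2/\alpha$ (using $\alpha < 1/L$; in fact $\alpha < 1/L$ gives the even stronger $1+\alpha\lambda > 1 - \alpha L > 0$). Hence for every real eigenvalue $\lambda$ of $H$ we have $0 < 1 + \alpha\lambda \le 1$, so $\rho(J_{\textrm{GDA}}) \le 1$, which is exactly GDA-stability.

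**Main obstacle.** The delicate step is the sign argument for $\lambda \le 0$: $H$ is non-symmetric, its real eigenvectors need not be orthogonal, and one cannot simply diagonalize, so I must be careful extracting "$\lambda \le 0$" from "$A \succeq 0$, $-C \succeq 0$" when a single real eigenvector is in play. The clean fix is to note that $H$ is similar via $D$ (since $D^2 = \vec I$) to $D H D = \begin{pmatrix} -A & B \\ -B^\top & C \end{pmatrix}$, whose symmetric part is $\mathrm{diag}(-A, C) \preceq 0$; a matrix with negative semidefinite symmetric part has all eigenvalues with real part $\le 0$, and under the hypothesis that these eigenvalues are real, they are $\le 0$. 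That is the argument I expect to spell out in full, the rest being the routine bounds above.
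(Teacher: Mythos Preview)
Your proposal is correct and follows essentially the same route as the paper: both compute the symmetric part $\tfrac{1}{2}(H+H^{\top})=\mathrm{diag}(-\nabla^2_{\vec{x}\vec{x}}f,\nabla^2_{\vec{y}\vec{y}}f)\preceq 0$, deduce that every eigenvalue of $H$ has nonpositive real part (the paper phrases this via the Ky Fan inequality, you via the numerical-range fact that a negative semidefinite symmetric part forces $\mathrm{Re}(\lambda)\le 0$), and then combine with $\rho(H)\le L<1/\alpha$ to place the spectrum of $J_{\textrm{GDA}}=\vec{I}+\alpha H$ inside $[-1,1]$. Your detour through the similarity $DHD$ is unnecessary, since the symmetric part of $H$ itself is already the block-diagonal matrix you want.
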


\begin{proof} By definition of local min-max, it holds that $\nabla_{\vec{x}\vec{x}}^2 f$ is positive semi-definite and also $\nabla_{\vec{y}\vec{y}}^2 f$ is negative semi-definite. Hence the symmetric matrix below (matrix $H_{\textrm{GDA}}$ is given by equation (\ref{eq:H})) \[
\frac{1}{2} \left(H_{\textrm{GDA}} + H_{\textrm{GDA}}^{\top}\right) = \left(\begin{array}{cc}
 - \nabla^2_{\vec{x}\vec{x}}f & \vec{0}_{n\times m}
\\ \vec{0}_{m\times n} &  \nabla^2_{\vec{y}\vec{y}}f
\end{array}\right)
\]
is negative semi-definite. We use the Ky Fan inequality which states that the sequence (in decreasing order) of the eigenvalues of $\frac{1}{2}(H_{\textrm{GDA}}+H_{\textrm{GDA}}^{\top})$ majorizes the real part of the sequence of the eigenvalues of $H_{\textrm{GDA}}$ (see \cite{kyfan}, page 4). By assumption that $H_{\textrm{GDA}}$ has real eigenvalues we conclude that $\lambda_{\max}(H_{\textrm{GDA}}) \leq \frac{1}{2}\lambda_{\max}(H_{\textrm{GDA}}+H_{\textrm{GDA}}^{\top})\leq 0$ since $H_{\textrm{GDA}}+H_{\textrm{GDA}}^{\top}$ is negative semi-definite. Therefore the spectrum of $I+\alpha H_{\textrm{GDA}}$ lies in $[-1,1]$, thus $(\vec{x}^*,\vec{y}^*)$ is GDA-stable.
\end{proof}
\begin{lemma}\label{lem:converse} The converse of Lemma \ref{lem:minmaxGDA} is false. There are functions with critical points that are GDA-stable but not local min-max. An example is $f(x,y) = -\frac{1}{8}x^2 - \frac{1}{2}y^2 +\frac{6}{10}xy$\footnote{See Figure \ref{fig:GDAstable}.}.
\end{lemma}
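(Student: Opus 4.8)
This is a direct verification on the quadratic $f(x,y)=-\tfrac18 x^2-\tfrac12 y^2+\tfrac{6}{10}xy$: first locate its critical point, then show that point fails the local min-max inequality, and finally show the GDA Jacobian there is a contraction for small $\alpha$. For the first step I would compute $\nabla f(x,y)=\bigl(-\tfrac14 x+\tfrac35 y,\ \tfrac35 x-y\bigr)$ and set it to $\vec 0$; the resulting linear system has determinant $\tfrac14-\tfrac{9}{25}=\tfrac{11}{100}\neq 0$, so the origin is the unique critical point and $f(0,0)=0$. For the second step, restrict to the line $y=0$: $f(x,0)=-\tfrac18 x^2<0=f(0,0)$ for every $x\neq 0$, which violates the requirement $f(\vec x^*,\vec y^*)\le f(\vec x,\vec y^*)$ in Definition~\ref{def:main} (equivalently $\nabla^2_{xx}f=-\tfrac14\prec 0$, so the origin is a strict local \emph{maximum} along $x$). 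Hence the hypothesis of Lemma~\ref{lem:minmaxGDA} genuinely fails at this point, and that lemma says nothing about it.

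\textbf{GDA-stability.} Using equation~(\ref{eq:H}), at the origin $H_{\textrm{GDA}}=\bigl(\begin{smallmatrix}1/4 & -3/5\\ 3/5 & -1\end{smallmatrix}\bigr)$, with trace $-\tfrac34$ and determinant $\tfrac{11}{100}$; its characteristic polynomial $\lambda^2+\tfrac34\lambda+\tfrac{11}{100}$ has positive discriminant $\tfrac{9}{16}-\tfrac{11}{25}=\tfrac{49}{400}$, so its eigenvalues are the real numbers $-\tfrac15$ and $-\tfrac{11}{20}$, both negative. Since $J_{\textrm{GDA}}=\vec I_2+\alpha H_{\textrm{GDA}}$ (the identity recorded just after equation~(\ref{eq:Jh})), the eigenvalues of the GDA Jacobian at the origin are $1-\tfrac{\alpha}{5}$ and $1-\tfrac{11\alpha}{20}$, which lie strictly inside $(0,1)$ for every $0<\alpha<\tfrac{20}{11}$ — in particular for all sufficiently small $\alpha>0$ (indeed for the entire admissible range $0<\alpha<1/L$). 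Therefore $\rho(J_{\textrm{GDA}})<1$ and the origin is GDA-stable, which exhibits the claimed strict inclusion.

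\textbf{Main obstacle / remark.} There is essentially no obstacle beyond the arithmetic; the only point requiring care is keeping straight the block and sign structure of $H_{\textrm{GDA}}$ versus $\nabla^2 f$. Conceptually, what makes the example work is that the off-diagonal Hessian block $\nabla^2_{xy}f=\tfrac35$ is large enough relative to the diagonal blocks $-\tfrac14$ and $-1$ to pull the spectrum of $H_{\textrm{GDA}}$ onto the negative real axis even though $\nabla^2_{xx}f$ has the ``wrong'' sign for local min-max — this coupling is precisely the mechanism by which GDA-stability can hold at a point that is not a local min-max.
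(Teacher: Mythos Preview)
Your proof is correct and follows essentially the same route as the paper's: compute $J_{\textrm{GDA}}=\vec I_2+\alpha H_{\textrm{GDA}}$ at the origin, check that its eigenvalues have modulus $<1$, and note $(0,0)$ is not local min-max; you are simply more explicit than the paper in actually solving for the eigenvalues $-\tfrac15,-\tfrac{11}{20}$ of $H_{\textrm{GDA}}$ and in exhibiting the failure of the min-max inequality along $y=0$. One harmless arithmetic slip: $\tfrac14-\tfrac{9}{25}=-\tfrac{11}{100}$, not $+\tfrac{11}{100}$, but the conclusion (nonzero determinant, unique critical point) is unaffected.
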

\begin{proof} We provide an example with two variables (so that we can also give a figure). Let $f(x,y) = -\frac{1}{8}x^2 - \frac{1}{2}y^2 + \frac{6}{10}xy$. Computing the Jacobian of the update rule of dynamics (\ref{eq:gradient}) at point $(0,0)$ we get that
\begin{equation}\label{eq:Jexample}
J_{\textrm{GDA}} = \left(\begin{array}{cc}
1 + \frac{1}{4}\alpha & - \frac{6}{10}\alpha
\\ \frac{6}{10}\alpha  & 1 - \alpha
\end{array}\right),
\end{equation}
Both eigenvalues of $J_{\textrm{GDA}}$ have magnitude less than 1 (for any $0<\alpha< \frac{1}{L}$ where $L \leq 1.34$).
Finally matrix $H_{\textrm{GDA}}$ has real eigenvalues. Therefore there exists a neighborhood $U$ of $(0,0)$ so that for all $(x_0,y_0) \in U$, we get that $\lim_t(x_t,y_t) = (0,0)$ for GDA dynamics (\ref{eq:gradient}). However it is clear that $(0,0)$ is not a local min-max. See also Figure \ref{fig:GDAstable} for a pictorial illustration of the result.
\end{proof}

\begin{figure}[h!]
		\centering     
			\includegraphics[width=0.75\linewidth]{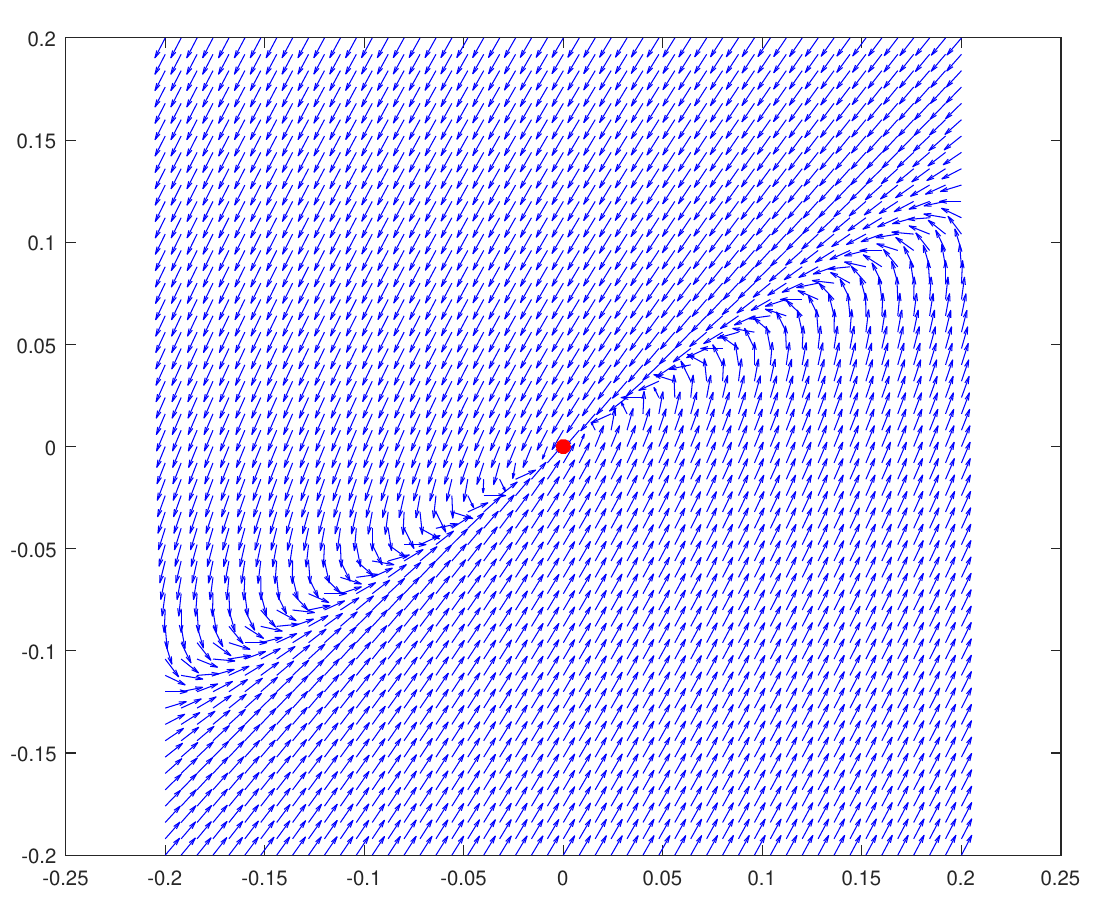}
			\caption{Function $f(x,y) = -\frac{1}{8}x^2 - \frac{1}{2}y^2 + \frac{6}{10}xy$ and $\alpha = 0.001$. The arrows point towards the next step of the Gradient Descent/Ascent dynamics. We can see that the system converges to $(0,0)$ point (GDA-stable), which is not a local min-max critical point. }
			{\label{fig:GDAstable}}
		\end{figure}

\noindent
We end Section \ref{sec:GDA} by characterizing the case in which $H$ has complex eigenvalues.
\begin{lemma}[\textbf{Imaginary eigenvalues}]\label{lem:Hcomplex} There are functions with critical points that are not GDA-stable but are local min-max when matrix $H$ (see equations (\ref{eq:Hmain})) has imaginary eigenvalues.
\end{lemma}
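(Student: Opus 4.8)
The plan is to settle this by exhibiting an explicit example, and the simplest one, the bilinear function $f(x,y)=xy$ on $\mathbb{R}\times\mathbb{R}$ with candidate critical point at the origin, already does the job. First I would check the basic bookkeeping: $\nabla f(x,y)=(y,x)$ vanishes at $(0,0)$, so $(0,0)$ is a critical point, and $\nabla f$ is Lipschitz with constant $L=1$. Next I would verify that $(0,0)$ is local min-max in the sense of Definition~\ref{def:main}: restricting to the axes gives $f(0,y)=0=f(0,0)=f(x,0)$, so the defining inequalities $f(\vec{x}^*,\vec{y})\le f(\vec{x}^*,\vec{y}^*)\le f(\vec{x},\vec{y}^*)$ hold (with equality) on all of $\mathbb{R}^2$, in particular on any neighborhood $U$.

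The heart of the argument is then the spectral computation. At the origin $\nabla^2_{\vec{x}\vec{x}}f=\nabla^2_{\vec{y}\vec{y}}f=0$ and $\nabla^2_{\vec{x}\vec{y}}f=\nabla^2_{\vec{y}\vec{x}}f=1$, so the matrix $H$ of~\eqref{eq:Hmain} is $H=\left(\begin{smallmatrix}0&-1\\1&0\end{smallmatrix}\right)$, whose eigenvalues are $\pm i$, i.e.\ purely imaginary; this is exactly the hypothesis of the statement (and incidentally shows $f$ violates Assumption~\ref{ass:non-imaginary} at $(0,0)$). Since the Jacobian of GDA at a critical point equals $\vec{I}_{n+m}+\alpha H$ (cf.\ equations~\eqref{eq:Jh}, \eqref{eq:H}), here we get $J_{\textrm{GDA}}=\left(\begin{smallmatrix}1&-\alpha\\ \alpha&1\end{smallmatrix}\right)$, with eigenvalues $1\pm i\alpha$ and hence $\rho(J_{\textrm{GDA}})=\sqrt{1+\alpha^2}>1$ for every $\alpha>0$. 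Therefore $(0,0)$ is a linearly unstable fixed point of GDA for all stepsizes $0<\alpha<\frac{1}{L}=1$ (indeed for all $\alpha>0$), while being local min-max, which is precisely what the lemma claims.

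I do not expect a genuine obstacle here beyond verifying definitions; the one subtlety worth flagging is that this example uses the degenerate case of local min-max in which the defining inequalities hold with equality, so one should confirm Definition~\ref{def:main} admits it (it does, as written). If a ``curved'' example is wanted instead, I would note that the same reasoning with $f(x,y)=\frac{a}{2}x^2-\frac{c}{2}y^2+bxy$ shows local min-max at the origin requires only $a,c\ge 0$, whereas purely imaginary eigenvalues of $H=\left(\begin{smallmatrix}-a&-b\\ b&-c\end{smallmatrix}\right)$ force $\mathrm{tr}\,H=-(a+c)=0$, hence $a=c=0$; so the bilinear instance is essentially the only shape this phenomenon takes, which is why I would present $f(x,y)=xy$ directly.
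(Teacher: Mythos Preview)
Your proposal is correct and matches the paper's own proof essentially line for line: the paper also takes $f(x,y)=xy$, observes $(0,0)$ is local min-max, computes $J_{\textrm{GDA}}=\left(\begin{smallmatrix}1&-\alpha\\ \alpha&1\end{smallmatrix}\right)$ with eigenvalues $1\pm\alpha i$ of modulus $\sqrt{1+\alpha^2}>1$, and concludes GDA-instability (the paper additionally remarks that $x_t^2+y_t^2=(1+\alpha^2)^t(x_0^2+y_0^2)$, but this is not needed for the stated claim). Your extra observations about the Lipschitz constant and the quadratic-form uniqueness are fine embellishments and do not diverge from the paper's approach.
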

\begin{proof} Let $f(x,y) = xy$. It is clear that critical point $(0,0)$ is a local min-max point.
Computing the Jacobian of the update rule of dynamics (\ref{eq:gradient}) at point $(0,0)$ we get that
\begin{equation}\label{eq:Jexample}
J_{\textrm{GDA}} = \left(\begin{array}{cc}
1  & - \alpha
\\ \alpha  & 1
\end{array}\right),
\end{equation}
For any $\alpha>0$ we have that the eigenvalues of $J_{\textrm{GDA}}$ are $1 \pm \alpha i$,\footnote{We denote $i := \sqrt{-1}$.} so they have magnitude greater than $1$ (and is clear that $H_{\textrm{GDA}}$ has complex eigenvalues). It is easy to see that $x_{t+1}^2 + y_{t+1}^2 = (1+\alpha^2)(x_t^2+y_t^2)$, i.e., inductively we have $$x_t^2+y_t^2 = (1+\alpha^2)^t (x_0^2+y_0^2),$$ hence GDA dynamics diverges.
\end{proof}
We complete the characterization for the relation between GDA-stable critical points and local min-max with the following lemma:
\begin{lemma}[\textbf{Real part nonzero}]\label{lem:real part} Let $(\vec{x}^*,\vec{y}^*)$ be a local min-max critical point of $f$ and matrix $H$ (see equations (\ref{eq:Hmain})) computed at $(\vec{x}^*,\vec{y}^*)$ has all its eigenvalues with real part nonzero (i.e., Assumption \ref{ass:non-imaginary}). There is a small enough step-size $\alpha>0$ so that $(\vec{x}^*,\vec{y}^*)$ is GDA-stable.
\end{lemma}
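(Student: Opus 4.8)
The plan is to analyze the eigenvalues of $J_{\textrm{GDA}} = \vec{I}_{n+m} + \alpha H_{\textrm{GDA}}$ at the critical point, where $H_{\textrm{GDA}}$ is the matrix from~\eqref{eq:H}; note this coincides with $H$ from~\eqref{eq:Hmain}. The key observation is that the map $\lambda \mapsto 1 + \alpha\lambda$ sends each eigenvalue $\lambda$ of $H$ to an eigenvalue of $J_{\textrm{GDA}}$, so $(\vec{x}^*,\vec{y}^*)$ is GDA-stable precisely when $|1+\alpha\lambda| \le 1$ for every eigenvalue $\lambda$ of $H$ (in fact we will aim for strict inequality, giving asymptotic stability). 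Writing $\lambda = a + bi$ with $a,b \in \mathbb{R}$, we have $|1+\alpha\lambda|^2 = (1+\alpha a)^2 + \alpha^2 b^2 = 1 + 2\alpha a + \alpha^2(a^2+b^2)$, which is $< 1$ iff $2a + \alpha(a^2+b^2) < 0$, i.e. iff $\alpha < \tfrac{-2a}{a^2+b^2}$ (assuming $a < 0$). So the whole argument reduces to showing that every eigenvalue of $H$ has \emph{strictly negative} real part, after which we pick $\alpha$ smaller than the minimum of $\tfrac{-2a}{a^2+b^2}$ over the finitely many eigenvalues.

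First I would establish that $\mathrm{Re}(\lambda) \le 0$ for every eigenvalue $\lambda$ of $H$. This is exactly the Ky Fan majorization argument already used in the proof of Lemma~\ref{lem:minmaxGDA}: since $(\vec{x}^*,\vec{y}^*)$ is local min-max, $\nabla^2_{\vec{x}\vec{x}}f \succeq 0$ and $\nabla^2_{\vec{y}\vec{y}}f \preceq 0$, so the symmetric part $\tfrac12(H + H^{\top})$ is negative semidefinite, and Ky Fan's inequality says the decreasing sequence of eigenvalues of $\tfrac12(H+H^\top)$ majorizes the decreasing sequence of real parts of the eigenvalues of $H$; in particular $\max_\lambda \mathrm{Re}(\lambda) \le \lambda_{\max}\bigl(\tfrac12(H+H^\top)\bigr) \le 0$. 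Now Assumption~\ref{ass:non-imaginary} rules out $\mathrm{Re}(\lambda) = 0$, so in fact $\mathrm{Re}(\lambda) < 0$ for all eigenvalues $\lambda$ of $H$. (Note $H$ has finitely many eigenvalues, so this minimum is attained and is a strictly negative number; also for any eigenvalue with $a<0$ we have $a^2+b^2>0$, so $\tfrac{-2a}{a^2+b^2}$ is a well-defined positive bound.)

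Then I would conclude: set $\beta = \min_\lambda \tfrac{-2\,\mathrm{Re}(\lambda)}{|\lambda|^2} > 0$ over the eigenvalues $\lambda$ of $H$, and also require $\beta \le \tfrac1L$ to stay within the standing hypotheses of the section. For any $0 < \alpha < \beta$, the computation above gives $|1+\alpha\lambda| < 1$ for every eigenvalue $\lambda$ of $H$, hence $\rho(J_{\textrm{GDA}}) < 1$, so $(\vec{x}^*,\vec{y}^*)$ is GDA-stable (indeed asymptotically stable, by Proposition~\ref{prop:linear implies stable}), which is the definition of GDA-stable in the ``sufficiently small $\alpha$'' sense used in the paper. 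I do not anticipate a real obstacle here — the only subtlety is making sure the negative-real-part conclusion is strict (this is exactly where Assumption~\ref{ass:non-imaginary} is consumed, which is why this lemma needs it while Lemma~\ref{lem:minmaxGDA} instead assumed real eigenvalues), and being slightly careful that one cannot in general take a single $\alpha$ working simultaneously for all functions $f$ — the bound $\beta$ genuinely depends on the spectrum of $H$ at the point in question, consistent with the statement.
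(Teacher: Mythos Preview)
Your proposal is correct and follows essentially the same route as the paper's proof: Ky Fan to get $\mathrm{Re}(\lambda)\le 0$, Assumption~\ref{ass:non-imaginary} to make it strict, then the elementary bound $|1+\alpha\lambda|^2 = 1 + 2\alpha\,\mathrm{Re}(\lambda) + \alpha^2|\lambda|^2 < 1$ for $\alpha$ below $\min_\lambda \tfrac{-2\,\mathrm{Re}(\lambda)}{|\lambda|^2}$. In fact your arithmetic is slightly more careful than the paper's, which drops the factor of $2$ in the expansion of $|1+\alpha\lambda|^2$ and hence states the threshold as $-\mathrm{Re}(\lambda)/|\lambda|^2$ rather than $-2\,\mathrm{Re}(\lambda)/|\lambda|^2$.
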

\begin{proof} The proof follows the steps of the proof of Lemma \ref{lem:minmaxGDA}. Similarly, using Ky Fan inequality we know that for any eigenvalue $\lambda$ of $H_{\textrm{GDA}}$ it holds that $$\textrm{Re}(\lambda)\leq \frac{1}{2}\lambda_{\max}(H_{\textrm{GDA}} + H^{\top}_{\textrm{GDA}})\leq 0.$$
Hence we conclude that $\textrm{Re}(\lambda)\leq 0$. Additionally, the corresponding eigenvalue of $J_{\textrm{GDA}}$ is $1+\alpha\lambda$.
By choosing $\alpha < \min_{\lambda} \{-\frac{\textrm{Re}(\lambda)}{|\lambda|^2}\}$, it is easy to see that $|1+\alpha\lambda|^2 = 1+ \alpha \textrm{Re}(\lambda) + \alpha^2 |\lambda|^2 <1$ for all the eigenvalues $\lambda$ of $H_{\textrm{GDA}}$, hence the eigenvalues of $J_{\textrm{GDA}}$ have magnitude less than one.
\end{proof}

\begin{remark}\label{rem:strongly} If the critical point $(\vec{x}^*,\vec{y}^*)$ is strongly local min-max then $\lambda_{\max}(H)<0$ and hence $(\vec{x}^*,\vec{y}^*)$ is attracting under GDA dynamics, i.e., it holds that $\mbox{Strongly Local min-max} \; \subset \; \mbox{GDA-stable}$.
\end{remark}

\section{Optimistic Gradient Descent/Ascent} \label{sec:OGDA}

The results of the previous section cannot carry over to Optimistic Gradient Descent/Ascent due to the fact that the dynamics has memory and is more challenging to analyze. Here we show that Optimistic Gradient Descent/Ascent avoid OGDA-unstable critical points and we also relate the eigenvalues of the Jacobian of OGDA to the eigenvalues of the Jacobian of GDA. In particular we show that $\mbox{GDA-stable}\  \subset \mbox{OGDA-stable}$ (inclusion strict). In the beginning we will construct a dynamical system that captures the dynamics of OGDA (\ref{eq:OGDA}).

\subsection{Constructing the Dynamical System}
We define the function $F$ to be $F(\vec{x},\vec{y},\vec{z},\vec{w}) = f(\vec{x},\vec{y})$ for all $(\vec{x},\vec{y},\vec{z},\vec{w}) \in \mathcal{X}\times \mathcal{Y} \times \mathcal {X}\times \mathcal{Y}$ (think of the last two vector components as dummy for function $F$, its value does not depend on them). Hence it is clear that $\nabla_{\vec{z}} F(\vec{x},\vec{y},\vec{z},\vec{w}) = \vec{0}$ and $\nabla_{\vec{w}} F(\vec{x},\vec{y},\vec{z},\vec{w}) = \vec{0}$. The same holds for $\nabla_{\vec{x}} F(\vec{z},\vec{w},\vec{x},\vec{y}) = \vec{0}$ and $\nabla_{\vec{y}} F(\vec{z},\vec{w},\vec{x},\vec{y}) = \vec{0}$.

\noindent
We define the following function $g$ which consists of 4 components:
\begin{equation}\label{eq:mysystem}
\begin{array}{llll}
g(\vec{x},\vec{y},\vec{z},\vec{w}) := (g_1(\vec{x},\vec{y},\vec{z},\vec{w}),g_2(\vec{x},\vec{y},\vec{z},\vec{w}),g_3(\vec{x},\vec{y},\vec{z},\vec{w}),g_4(\vec{x},\vec{y},\vec{z},\vec{w})),\\
g_1(\vec{x},\vec{y},\vec{z},\vec{w}):= \vec{I}_n\vec{x} - 2\alpha \nabla_{\vec{x}}F(\vec{x},\vec{y},\vec{z},\vec{w}) + \alpha \nabla_{\vec{z}}F(\vec{z},\vec{w},\vec{x},\vec{y}),\\
g_2(\vec{x},\vec{y},\vec{z},\vec{w}):= \vec{I}_m\vec{y} + 2\alpha \nabla_{\vec{y}}F(\vec{x},\vec{y},\vec{z},\vec{w}) - \alpha \nabla_{\vec{w}}F(\vec{z},\vec{w},\vec{x},\vec{y}),\\
g_3(\vec{x},\vec{y},\vec{z},\vec{w}):= \vec{I}_n\vec{x},\\
g_4(\vec{x},\vec{y},\vec{z},\vec{w}):= \vec{I}_m\vec{y}.
\end{array}
\end{equation}
It is not hard to check that $(\vec{x}_{t+1},\vec{y}_{t+1},\vec{x}_{t},\vec{y}_{t}) = g(\vec{x}_{t},\vec{y}_{t},\vec{x}_{t-1},\vec{y}_{t-1}),$
so $g$ captures exactly the dynamics of OGDA (\ref{eq:OGDA}). The idea behind the construction of the dynamical system above is common in the literature of ODEs (ordinal differential equations) where in order to solve (typically to understand the qualitative behavior) a higher order ODE, one approach is to express it as a linear system of ODEs.
\subsection{Analyzing OGDA via system (\ref{eq:mysystem})}
As in the case of GDA, we need to show the following key lemma in order to use the Center-stable manifold theorem.

\begin{lemma}[\textbf{OGDA is a local diffeomorphism}]\label{lem:ogdadiffeomorphism} Let $f$ is real-valued $C^2$ and $\nabla f$ is Lipschitz with constant $L$ and $0<\alpha < \frac{1}{L}$. Under the Assumption \ref{ass:invertible} we get that the update rule $g$ of the OGDA dynamics (\ref{eq:mysystem}) is a local diffeomorphism.
\end{lemma}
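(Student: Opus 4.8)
The plan is to follow exactly the template of Lemma~\ref{lem:GDAdiffeomorphism}. Since $f$ is $C^2$, the map $g$ of~\eqref{eq:mysystem} is continuously differentiable, so by the Inverse Function Theorem it suffices to prove that the Jacobian $J_g$ of $g$ is invertible at every $(\vec{x},\vec{y},\vec{z},\vec{w})$. First I would rewrite $g$ in concrete form: unwinding the dummy-variable conventions, $g_1 = \vec{x} - 2\alpha \nabla_{\vec{x}} f(\vec{x},\vec{y}) + \alpha \nabla_{\vec{x}} f(\vec{z},\vec{w})$, $g_2 = \vec{y} + 2\alpha \nabla_{\vec{y}} f(\vec{x},\vec{y}) - \alpha \nabla_{\vec{y}} f(\vec{z},\vec{w})$, $g_3 = \vec{x}$, $g_4 = \vec{y}$. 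Differentiating, and grouping the $2n+2m$ coordinates into the two blocks $(\vec{x},\vec{y})$ and $(\vec{z},\vec{w})$, one obtains
\[
J_g = \begin{pmatrix} P & Q \\ \vec{I}_{n+m} & \vec{0}_{n+m} \end{pmatrix},
\]
where the bottom block row comes from $g_3,g_4$, the block $P = \vec{I}_{n+m} + 2\alpha H_{\textrm{GDA}}$ is the Jacobian of $(g_1,g_2)$ in $(\vec{x},\vec{y})$ (the GDA Jacobian~\eqref{eq:Jh} with $2\alpha$ in place of $\alpha$), and $Q$ is the Jacobian of $(g_1,g_2)$ in $(\vec{z},\vec{w})$.

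Next I would observe that, because the lower-left block is the identity and the lower-right block is zero, $J_g$ is invertible if and only if $Q$ is: if $J_g (\vec{u},\vec{v})^{\top} = \vec{0}$, the bottom block row forces $\vec{u} = \vec{0}$ and then the top block row forces $Q\vec{v} = \vec{0}$, so $\ker J_g = \{\vec{0}\}$ exactly when $\ker Q = \{\vec{0}\}$. It remains to identify $Q$. Only the terms $\pm \alpha \nabla f(\vec{z},\vec{w})$ in $g_1,g_2$ depend on $(\vec{z},\vec{w})$, so
\[
Q = \alpha \begin{pmatrix} \vec{I}_n & \vec{0}_{n \times m} \\ \vec{0}_{m \times n} & -\vec{I}_m \end{pmatrix} \nabla^2 f(\vec{z},\vec{w}),
\]
which has $\det Q = (-1)^m \alpha^{n+m} \det \nabla^2 f(\vec{z},\vec{w})$. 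By Assumption~\ref{ass:invertible} the Hessian is invertible everywhere and $\alpha > 0$, so $\det Q \neq 0$; hence $Q$, and therefore $J_g$, is invertible at every point, and the Inverse Function Theorem gives the claim.

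I do not expect a genuine obstacle; the one point worth highlighting is precisely where Assumption~\ref{ass:invertible} enters, and why it is needed here but was not for GDA. In Lemma~\ref{lem:GDAdiffeomorphism} the relevant matrix was $\vec{I}_{n+m} + \alpha H_{\textrm{GDA}}$, whose invertibility followed for free from $\alpha < 1/L$ together with $\rho(H_{\textrm{GDA}}) \leq L$. Here, by contrast, invertibility of $J_g$ reduces to invertibility of the off-diagonal block $Q$, which encodes the dependence of the new iterate on the \emph{old} iterate and equals --- up to the scalar $\alpha$ and an invertible sign matrix --- the full Hessian $\nabla^2 f$ itself; since there is no ``$\vec{I}_{n+m}+$'' term present, smallness of $\alpha$ cannot help and non-degeneracy of the Hessian must be assumed. (The Lipschitz hypothesis and $\alpha < 1/L$ are the standing assumptions of the section and are not actually needed for this particular lemma; they re-enter later, e.g.\ in the center-stable manifold argument and the GDA/OGDA stability comparison.)
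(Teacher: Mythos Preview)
Your proof is correct and follows essentially the same approach as the paper's: compute the block Jacobian of $g$, reduce its invertibility to that of the off-diagonal block $Q$ (the paper does this via row/column operations, you via the kernel argument), identify $Q$ as $\alpha$ times a sign matrix times $\nabla^2 f(\vec{z},\vec{w})$, and invoke Assumption~\ref{ass:invertible}. Your $2\times 2$ block presentation is a bit cleaner than the paper's $4\times 4$ version, and your observation that the Lipschitz and step-size hypotheses are not actually used in this particular lemma is correct.
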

\begin{proof} It suffices to show that the Jacobian of $g$, denoted by $J_{\textrm{OGDA}}$ is invertible and then by Inverse Function theorem the claim follows.
After calculations the Jacobian boils down to the following (we set $F'(\vec{x},\vec{y},\vec{z},\vec{w}) = F(\vec{z},\vec{w},\vec{x},\vec{y})$) :
\begin{equation}\label{eq:Jg}
J_{\textrm{OGDA}} = \left(\begin{array}{cccc}
\vec{I}_n - 2\alpha\nabla^2_{\vec{x}\vec{x}} F & - 2\alpha\nabla^2_{\vec{x}\vec{y}} F & \alpha\nabla^2_{\vec{z}\vec{z}} F' & \alpha\nabla^2_{\vec{z}\vec{w}} F'
\\ 2\alpha\nabla^2_{\vec{y}\vec{x}} F & \vec{I}_m + 2\alpha\nabla^2_{\vec{y}\vec{y}} F & -\alpha\nabla^2_{\vec{w}\vec{z}} F' & -\alpha\nabla^2_{\vec{w}\vec{w}} F'
\\ \vec{I}_n & \vec{0}_{n\times m} & \vec{0}_{n\times n} & \vec{0}_{n\times m}
\\ \vec{0}_{m \times n} & \vec{I}_m & \vec{0}_{m \times n} & \vec{0}_{m\times m}
\end{array}\right),
\end{equation}
Observe that for $\alpha =0$, the matrix $J_{\textrm{GDA}}$ is not invertible, as opposed to the case of GDA which is the identity matrix $\vec{I}_{n+m}$ and hence is invertible. It is easy to see that for $\alpha=0$, then $g(\vec{x},\vec{y},\vec{z},\vec{w}) = (\vec{x},\vec{y},\vec{x},\vec{y})$, namely it is not even $1-1$ (not even locally).

The null space of $J_{\textrm{OGDA}}$ is the same as the null space of the following matrix $H_{\textrm{OGDA}}$ (after row and column operations)
\begin{equation}\label{eq:Hg}
H_{\textrm{OGDA}} = \left(\begin{array}{cccc}
\vec{0}_{n \times n}  & \vec{0}_{n\times m} & \alpha\nabla^2_{\vec{z}\vec{z}} F' & \alpha\nabla^2_{\vec{z}\vec{w}} F'
\\ \vec{0}_{m\times n} & \vec{0}_{m \times m}  & -\alpha\nabla^2_{\vec{w}\vec{z}} F' & -\alpha\nabla^2_{\vec{w}\vec{w}} F'
\\ \vec{I}_n & \vec{0}_{n\times m} & \vec{0}_{n\times n} & \vec{0}_{n\times m}
\\ \vec{0}_{m \times n} & \vec{I}_m & \vec{0}_{m \times n} & \vec{0}_{m \times m}
\end{array}\right),
\end{equation}
It is clear that under the assumption that the Hessian is invertible (see Assumption \ref{ass:invertible}), we get that
\begin{equation}\label{eq:Hg}
\left(\begin{array}{cc}
\nabla^2_{\vec{z}\vec{z}} F' & \nabla^2_{\vec{z}\vec{w}} F'
\\  -\nabla^2_{\vec{w}\vec{z}} F' & -\nabla^2_{\vec{w}\vec{w}} F'
\end{array}\right) \textrm{ is invertible}
\end{equation}
and so is $H_{\textrm{OGDA}}$.
\end{proof}

Again as in Section \ref{sec:GDA}, we are able to prove the following measure zero argument using Lemma \ref{lem:ogdadiffeomorphism} and Center-Stable manifold theorem.
\begin{theorem}[\textbf{Measure zero for OGDA}]\label{thm:measure2} Let $f$ be twice differentiable and $\nabla f$ is Lipschitz with constant $L$. Suppose that Assumption \ref{ass:invertible} holds and $0<\alpha < \frac{1}{L}$. Let $g$ be the update rule of the OGDA dynamics (\ref{eq:OGDA}), $(\vec{x}^*,\vec{y}^*,\vec{x}^*,\vec{y}^*)$ be a OGDA-unstable critical point and $W_{\textrm{OGDA}}(x^*,y^*,x^*,y^*)$ be its stable set, i.e.,
\begin{align*}
W_{\textrm{OGDA}}(x^*,y^*,x^*,y^*)= \{ (x_1,y_1,x_0,y_0): \lim_k g^k (x_1,y_1,x_0,y_0) =(x^*,y^*,x^*,y^*) \}.
\end{align*}
It holds that $W_{\textrm{OGDA}}(x^*,y^*,\vec{x}^*,\vec{y}^*)$ is of Lebesgue measure zero. Moreover if $W_{\textrm{OGDA}}$ is union of the stable sets of all OGDA-unstable critical points, then $W_{\textrm{OGDA}}$ has also measure zero (namely the proof works for non-isolated critical points).
\end{theorem}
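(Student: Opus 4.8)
The plan is to follow the same two-step template used for GDA in Theorem~\ref{thm:measure}, now applied to the lifted dynamical system $g$ of~\eqref{eq:mysystem} on $\mathbb{R}^{2n+2m}$. The two ingredients are Lemma~\ref{lem:ogdadiffeomorphism}, which (thanks to Assumption~\ref{ass:invertible} together with $0<\alpha<1/L$) guarantees that $g$ is a $C^1$ local diffeomorphism at every point, and the Center--Stable Manifold Theorem~\ref{thm:manifold}. First I would observe that at an OGDA-unstable fixed point $p^*=(\vec{x}^*,\vec{y}^*,\vec{x}^*,\vec{y}^*)$ we have $\rho(J_{\textrm{OGDA}}(p^*))>1$, so $J_{\textrm{OGDA}}(p^*)$ has at least one eigenvalue of modulus strictly larger than $1$; hence its unstable subspace $E^u$ is nontrivial and the center--stable subspace $E^{cs}=E^s\oplus E^c$ has dimension at most $2n+2m-1$. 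The Center--Stable Manifold Theorem then yields an open neighborhood $B$ of $p^*$ and an embedded $C^1$ disk $W^{cs}_{\mathrm{loc}}(p^*)$ of dimension $\dim E^{cs}<2n+2m$ — in particular a Lebesgue-null subset of $\mathbb{R}^{2n+2m}$ — such that any point whose entire forward $g$-orbit remains in $B$ must lie on $W^{cs}_{\mathrm{loc}}(p^*)$.

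Next I would prove the single-critical-point claim. If $q\in W_{\textrm{OGDA}}(p^*)$ then $g^k(q)\to p^*$, so there is $T$ with $g^k(q)\in B$ for all $k\ge T$; applying the manifold theorem to the point $g^T(q)$, whose forward orbit lies in $B$, gives $g^T(q)\in W^{cs}_{\mathrm{loc}}(p^*)$, i.e. $q\in g^{-T}\!\big(W^{cs}_{\mathrm{loc}}(p^*)\big)$. Hence $W_{\textrm{OGDA}}(p^*)\subseteq\bigcup_{k\ge 0} g^{-k}\!\big(W^{cs}_{\mathrm{loc}}(p^*)\big)$. Because $g$ is a local diffeomorphism, the preimage of a null set under $g$ is null: cover $\mathbb{R}^{2n+2m}$ (a Lindel\"of space) by countably many open sets on each of which $g$ restricts to a diffeomorphism onto its image, and use that diffeomorphisms send null sets to null sets together with countable subadditivity. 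By induction on $k$, each $g^{-k}(W^{cs}_{\mathrm{loc}}(p^*))$ is null, so the countable union is null, and therefore $W_{\textrm{OGDA}}(p^*)$ has Lebesgue measure zero.

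For the union over all OGDA-unstable critical points — including the non-isolated case — I would run the standard Lindel\"of argument, as in the GDA proof. Let $\mathcal{A}$ be the set of OGDA-unstable critical points; for each $p\in\mathcal{A}$ the manifold theorem supplies a ball $B_p$ and a null disk $W^{cs}_{\mathrm{loc}}(p)$. The family $\{B_p\}_{p\in\mathcal{A}}$ covers $\mathcal{A}$, and since $\mathbb{R}^{2n+2m}$ is second countable we may extract a countable subcover $\{B_{p_i}\}_{i\in\mathbb{N}}$ of $\mathcal{A}$. If $q\in W_{\textrm{OGDA}}$, then $g^k(q)\to p$ for some $p\in\mathcal{A}$, and $p\in B_{p_i}$ for some $i$; since $B_{p_i}$ is open there is $T$ with $g^k(q)\in B_{p_i}$ for all $k\ge T$, whence $g^T(q)\in W^{cs}_{\mathrm{loc}}(p_i)$ and $q\in\bigcup_{k\ge 0} g^{-k}\!\big(W^{cs}_{\mathrm{loc}}(p_i)\big)$. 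Therefore $W_{\textrm{OGDA}}\subseteq\bigcup_{i\in\mathbb{N}}\bigcup_{k\ge 0} g^{-k}\!\big(W^{cs}_{\mathrm{loc}}(p_i)\big)$, a countable union of null sets, and hence of Lebesgue measure zero.

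The conceptual content is light once the machinery is in place: the dimension count (unstable subspace nonempty $\Rightarrow$ center--stable manifold of codimension $\ge 1$) is immediate, and a converging orbit is eventually trapped in the relevant neighborhood essentially by definition. The point that genuinely requires care — and the reason Assumption~\ref{ass:invertible} is needed here but not for GDA — is that the Center--Stable Manifold Theorem demands that $g$ be a local diffeomorphism near the fixed point; at $\alpha=0$ (or, more to the point, without invertibility of $\nabla^2 f$) $g$ degenerates to $(\vec{x},\vec{y},\vec{z},\vec{w})\mapsto(\vec{x},\vec{y},\vec{x},\vec{y})$ and fails to be locally injective, so this hypothesis is not automatic. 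Lemma~\ref{lem:ogdadiffeomorphism} is precisely what removes this obstacle, after which the only remaining work is the bookkeeping with countable chart covers for the ``preimage of a null set is null'' step and the Lindel\"of extraction used for non-isolated critical points.
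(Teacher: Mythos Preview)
Your proposal is correct and follows essentially the same approach as the paper: invoke Lemma~\ref{lem:ogdadiffeomorphism} to ensure $g$ is a local diffeomorphism, apply the Center--Stable Manifold Theorem~\ref{thm:manifold} to obtain a codimension-$\geq 1$ local center--stable disk, trap eventually-convergent orbits in the associated neighborhood, pull back via countably many $g^{-k}$ using that local diffeomorphisms preserve null sets, and handle the (possibly non-isolated) union via a countable Lindel\"of subcover. The paper phrases the trapping step through $S_i=\bigcap_{k\ge 0} g^{-k}(B_{x^*_i})\subset W^{cs}_{\mathrm{loc}}$ rather than landing directly on $W^{cs}_{\mathrm{loc}}$, but this is the same argument.
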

The following corollary is immediate from Theorem \ref{thm:measure2}.
\begin{corollary} Let $(\vec{x}^*,\vec{y}^*,\vec{x}^*,\vec{y}^*)$ be OGDA-unstable. Assume $\mu$ is a measure of the starting points $(\vec{x}_1,\vec{y}_1,\vec{x}_0,\vec{y}_0)$ and is absolutely continuous with respect to the Lebesgue measure on $\mathbb{R}^{2n+2m}$. Then it holds that \[\textrm{Pr}[\lim_t(\vec{x}_t,\vec{y}_t,\vec{x}_{t-1},\vec{y}_{t-1}) = (\vec{x}^*,\vec{y}^*,\vec{x}^*,\vec{y}^*)]=0.\]
\end{corollary}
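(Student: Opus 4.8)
The plan is to derive this directly from Theorem~\ref{thm:measure2}; there is no genuine work beyond one measure-theoretic bookkeeping step, so the ``proof proposal'' is really an unpacking of the word ``immediate.'' The key structural fact, set up in Section~\ref{sec:OGDA}, is that OGDA~\eqref{eq:OGDA} becomes a \emph{deterministic} first-order dynamical system once its memory is packaged into the map $g$ of~\eqref{eq:mysystem} on $\mathbb{R}^{2n+2m}$: for every $t$ one has $(\vec{x}_{t+1},\vec{y}_{t+1},\vec{x}_t,\vec{y}_t) = g^{t}(\vec{x}_1,\vec{y}_1,\vec{x}_0,\vec{y}_0)$, a function of the initial quadruple alone. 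Therefore the event $\{\lim_t(\vec{x}_t,\vec{y}_t,\vec{x}_{t-1},\vec{y}_{t-1}) = (\vec{x}^*,\vec{y}^*,\vec{x}^*,\vec{y}^*)\}$ coincides, as a subset of the sample space, with $\{(\vec{x}_1,\vec{y}_1,\vec{x}_0,\vec{y}_0)\in W_{\textrm{OGDA}}(\vec{x}^*,\vec{y}^*,\vec{x}^*,\vec{y}^*)\}$, where $W_{\textrm{OGDA}}(\vec{x}^*,\vec{y}^*,\vec{x}^*,\vec{y}^*)$ is the stable set from Theorem~\ref{thm:measure2}. Hence the probability in the statement equals $\mu\bigl(W_{\textrm{OGDA}}(\vec{x}^*,\vec{y}^*,\vec{x}^*,\vec{y}^*)\bigr)$.

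Then I would finish in two short steps. First, Theorem~\ref{thm:measure2} --- whose hypotheses (Assumption~\ref{ass:invertible} and $0<\alpha<\tfrac1L$) are in force throughout Section~\ref{sec:OGDA} --- gives that $W_{\textrm{OGDA}}(\vec{x}^*,\vec{y}^*,\vec{x}^*,\vec{y}^*)$ has Lebesgue measure zero in $\mathbb{R}^{2n+2m}$. Second, the assumption that $\mu$ is absolutely continuous with respect to Lebesgue measure is, by definition, exactly the assertion that Lebesgue-null sets are $\mu$-null; so $\mu\bigl(W_{\textrm{OGDA}}(\vec{x}^*,\vec{y}^*,\vec{x}^*,\vec{y}^*)\bigr)=0$, and combining with the identification above yields $\Pr[\lim_t(\vec{x}_t,\vec{y}_t,\vec{x}_{t-1},\vec{y}_{t-1}) = (\vec{x}^*,\vec{y}^*,\vec{x}^*,\vec{y}^*)]=0$.

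No step is hard, so there is no real obstacle; the only items deserving a line of care are (i) measurability of $W_{\textrm{OGDA}}(\vec{x}^*,\vec{y}^*,\vec{x}^*,\vec{y}^*)$, which holds because $g$ (hence each $g^k$) is continuous, so $W_{\textrm{OGDA}}(\vec{x}^*,\vec{y}^*,\vec{x}^*,\vec{y}^*)=\bigcap_{m\ge1}\bigcup_{N\ge1}\bigcap_{k\ge N}\{\,\norm{g^k(\cdot)-(\vec{x}^*,\vec{y}^*,\vec{x}^*,\vec{y}^*)}<1/m\,\}$ is Borel, and (ii) the observation that the set appearing in Theorem~\ref{thm:measure2} is genuinely the set of \emph{all} bad initializations, which is immediate from its defining formula and is what makes the event-identification of the first paragraph an equality rather than a mere inclusion. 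Finally, for the stronger conclusion with the union over all OGDA-unstable critical points, the same two lines go through verbatim using the ``moreover'' clause of Theorem~\ref{thm:measure2}.
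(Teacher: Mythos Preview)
Your proposal is correct and matches the paper's approach exactly: the paper simply states that the corollary is ``immediate from Theorem~\ref{thm:measure2},'' and your write-up is precisely the unpacking of that word---identify the event with the stable set $W_{\textrm{OGDA}}$, invoke Theorem~\ref{thm:measure2} for Lebesgue-nullness, and use absolute continuity of $\mu$. Your additional remarks on Borel measurability and the ``moreover'' clause are welcome but go beyond what the paper itself spells out.
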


\subsection{Characterizing OGDA-stability}
In this subsection we provide an analysis for the eigenvalues of the Jacobian matrix $J_{\textrm{OGDA}}$ of the update rule $g$ of the system (\ref{eq:mysystem}). We begin by claiming that the set of GDA-stable critical points is a subset of the set of OGDA-critical points. We manage to show this by constructing a mapping between the eigenvalues of $J_{\textrm{GDA}}$ and $J_{\textrm{OGDA}}$.
\begin{lemma}[\textbf{GDA-stable are OGDA-stable}]\label{lem:ogdastability} Let $f$ be twice differentiable and $\nabla f$ be L-Lipschitz. Assume that $0<\alpha < \frac{1}{2L}$ and suppose $(\vec{x}^*,\vec{y}^*)$ is a critical point that is GDA-stable (i.e., stable according to dynamics (\ref{eq:gradient})). The critical point $(\vec{x}^*,\vec{y}^*,\vec{x}^*,\vec{y}^*)$ is stable according to OGDA dynamics (\ref{eq:OGDA}).
\end{lemma}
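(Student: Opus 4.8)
The plan is to linearize the dynamical system $g$ of \eqref{eq:mysystem} at the fixed point $(\vec{x}^*,\vec{y}^*,\vec{x}^*,\vec{y}^*)$, put the Jacobian $J_{\textrm{OGDA}}$ into $2\times 2$ block form with blocks of size $n+m$, relate its spectrum to the spectrum of the matrix $H_{\textrm{GDA}}$ from \eqref{eq:H} (recall from Assumption~\ref{ass:non-imaginary} that $J_{\textrm{GDA}}=\vec{I}_{n+m}+\alpha H_{\textrm{GDA}}$), and then run a root-location argument on a scalar quadratic using the two hypotheses $\rho(J_{\textrm{GDA}})\le 1$ and $\alpha<\tfrac{1}{2L}$.

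First I would evaluate $J_{\textrm{OGDA}}$ from \eqref{eq:Jg} at the fixed point. There $(\vec{z},\vec{w})=(\vec{x}^*,\vec{y}^*)$, so every Hessian block of $F$ and of $F'$ appearing in \eqref{eq:Jg} equals the corresponding block of $\nabla^2 f(\vec{x}^*,\vec{y}^*)$; collecting terms gives
\[
J_{\textrm{OGDA}}=\begin{pmatrix}\vec{I}_{n+m}+2\alpha H_{\textrm{GDA}} & -\alpha H_{\textrm{GDA}}\\ \vec{I}_{n+m}&\vec{0}_{n+m}\end{pmatrix}.
\]
If $\mu$ is an eigenvalue of $J_{\textrm{OGDA}}$ with eigenvector $(\vec{u},\vec{v})$, the bottom block forces $\vec{u}=\mu\vec{v}$ with $\vec{v}\neq\vec{0}$, and substituting into the top block reduces the equation to $\alpha(2\mu-1)H_{\textrm{GDA}}\vec{v}=(\mu^2-\mu)\vec{v}$. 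Since $\mu=\tfrac12$ makes the left side vanish but not the right, $\vec{v}$ must be an eigenvector of $H_{\textrm{GDA}}$, say for eigenvalue $\lambda$, and then $\mu$ solves $\mu^2-(1+2\alpha\lambda)\mu+\alpha\lambda=0$. Hence it suffices to show: for every eigenvalue $\lambda$ of $H_{\textrm{GDA}}$, both roots of $q_\lambda(\mu):=\mu^2-(1+2\alpha\lambda)\mu+\alpha\lambda$ lie in the closed unit disk.

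Set $\sigma:=\alpha\lambda$. Two facts are available. GDA-stability gives $|1+\alpha\lambda|\le 1$, since $1+\alpha\lambda$ is an eigenvalue of $J_{\textrm{GDA}}=\vec{I}_{n+m}+\alpha H_{\textrm{GDA}}$; equivalently $2\,\mathrm{Re}(\sigma)+|\sigma|^2\le 0$. Moreover, as in the proof of Lemma~\ref{lem:GDAdiffeomorphism}, $\norm[2]{\nabla^2 f}\le L$ and $H_{\textrm{GDA}}=\mathrm{diag}(-\vec{I}_n,\vec{I}_m)\,\nabla^2 f$, so $|\lambda|\le\rho(H_{\textrm{GDA}})\le L$ and hence $|\sigma|<\tfrac12$. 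The remaining, and main, task is to deduce from these that both roots of $q_\lambda$ are in the closed unit disk. For $\sigma=0$ the roots are $0$ and $1$. For $\sigma\neq0$ I would apply the Schur--Cohn (Jury) criterion for a complex quadratic, which here reduces to $\bigl|\,1+\sigma-2|\sigma|^2\,\bigr|^2\le(1-|\sigma|^2)^2$; writing $a=\mathrm{Re}(\sigma)$ and $s=|\sigma|^2$, the difference of the two sides equals $3s^2-s+2a(1-2s)$, and using $2a\le -s$ together with $0\le s<\tfrac14$ this is at most $5s^2-2s=s(5s-2)<0$. Therefore every eigenvalue of $J_{\textrm{OGDA}}$ has modulus at most $1$, i.e. the fixed point is OGDA-stable. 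Equivalently, one can bypass Schur--Cohn and argue directly from $\mu_\pm=\tfrac12\big((1+2\sigma)\pm\sqrt{1+4\sigma^2}\big)$ by a short case analysis.

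The delicate point, and where I expect the real work to be, is exactly that last quadratic estimate: the slack in it is precisely what the bound $\alpha<\tfrac{1}{2L}$ buys. The coefficient $2$ in the OGDA update halves the safe step-size range relative to GDA, and with only $\alpha<\tfrac{1}{L}$ one can choose $\sigma$ with $|1+\sigma|\le 1$ but $|\mu_+|>1$, so the inclusion would break; the whole argument therefore hinges on keeping track of both $|1+\sigma|\le 1$ and $|\sigma|<\tfrac12$ simultaneously.
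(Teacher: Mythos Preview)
Your argument is correct and follows the same overall route as the paper: linearize OGDA at the fixed point, reduce the spectrum of $J_{\textrm{OGDA}}$ to the roots of the scalar quadratic $\mu^2-(1+2r)\mu+r=0$ where $r=\alpha\lambda$ runs over the eigenvalues of $H_{\textrm{GDA}}$, and then argue that $|r|<\tfrac12$ together with $|1+r|\le 1$ forces both roots into the closed unit disk. The paper obtains the quadratic via row and column operations on $\det(\lambda I-J_{\textrm{OGDA}})$, arriving at the identity $q_{\textrm{OGDA}}(\lambda)=(2\lambda-1)^{n+m}q_{\textrm{GDA}}\bigl(\tfrac{\lambda^2+\lambda-1}{2\lambda-1}\bigr)$; your block eigenvector substitution is a cleaner way to the same quadratic. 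The substantive difference is at the endgame: the paper discharges the root-location inequality by a Mathematica \texttt{Reduce} call, whereas you give a self-contained analytic proof via the Schur--Cohn test. Your computation $|1+\sigma-2s|^2-(1-s)^2=3s^2-s+2a(1-2s)\le 5s^2-2s<0$ for $0<s<\tfrac14$ checks out, and the boundary case $\sigma=0$ (roots $0$ and $1$) is harmless, so the fixed point is indeed OGDA-stable.
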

\begin{proof} A fixed point of the dynamics (\ref{eq:OGDA}) is of the form $(\vec{x},\vec{y},\vec{x},\vec{y})$ (see Remark \ref{rem:fixedpoints}).
The Jacobian of the update rule $g$ becomes as follows:
\begin{equation}\label{eq:Jg}
J_{\textrm{OGDA}} = \left(\begin{array}{cccc}
\vec{I}_n - 2\alpha\nabla^2_{\vec{x}\vec{x}} F & - 2\alpha\nabla^2_{\vec{x}\vec{y}} F & \alpha\nabla^2_{\vec{x}\vec{x}} F & \alpha\nabla^2_{\vec{x}\vec{y}} F
\\ 2\alpha\nabla^2_{\vec{y}\vec{x}} F & \vec{I}_m + 2\alpha\nabla^2_{\vec{y}\vec{y}} F & -\alpha\nabla^2_{\vec{y}\vec{x}} F & -\alpha\nabla^2_{\vec{y}\vec{y}} F
\\ \vec{I}_n & \vec{0}_{n\times m} & \vec{0}_{n\times n} & \vec{0}_{n\times m}
\\ \vec{0}_{m \times n} & \vec{I}_m & \vec{0}_{m \times n} & \vec{0}_{m \times m}
\end{array}\right).
\end{equation}
We would like to find a relation between the eigenvalues of matrix (\ref{eq:Jg}) and matrix (\ref{eq:Jh}) (relate the Jacobian of both dynamics GDA and OGDA). We start with the matrix
\[
\lambda \vec{I}_{2m+2n} - J_{\textrm{OGDA}}=\left(\begin{array}{cccc}
\lambda\vec{I}_{n}- \vec{I}_n + 2\alpha\nabla^2_{\vec{x}\vec{x}} F &  2\alpha\nabla^2_{\vec{x}\vec{y}} F & -\alpha\nabla^2_{\vec{x}\vec{x}} F & -\alpha\nabla^2_{\vec{x}\vec{y}} F
\\ -2\alpha\nabla^2_{\vec{y}\vec{x}} F & \lambda\vec{I}_{m} - \vec{I}_m - 2\alpha\nabla^2_{\vec{y}\vec{y}} F & \alpha\nabla^2_{\vec{y}\vec{x}} F & \alpha\nabla^2_{\vec{y}\vec{y}} F
\\ -\vec{I}_n & \vec{0}_{n\times m} & \lambda\vec{I}_{n} & \vec{0}_{n\times m}
\\ \vec{0}_{m \times n} & -\vec{I}_m & \vec{0}_{m \times n} & \lambda\vec{I}_{m}
\end{array}\right).
\]
The absolute value of the determinant of a matrix remains invariant under row/column operations (add a multiple of a row/column to another row/column or exchange rows/columns). After such operations, the determinant of the matrix above has determinant in absolute value equal to (we assume that $\lambda \neq 0$)

\[
\textrm{det}\left(\begin{array}{cccc}
\lambda\vec{I}_{n}- \vec{I}_n + (2-1/\lambda)\alpha\nabla^2_{\vec{x}\vec{x}} F &  (2-1/\lambda)\alpha\nabla^2_{\vec{x}\vec{y}} F & -\alpha\nabla^2_{\vec{x}\vec{x}} F & -\alpha\nabla^2_{\vec{x}\vec{y}} F
\\ (1/\lambda-2)\alpha\nabla^2_{\vec{y}\vec{x}} F & \lambda\vec{I}_{m} - \vec{I}_m + (1/\lambda-2)\alpha\nabla^2_{\vec{y}\vec{y}} F & \alpha\nabla^2_{\vec{y}\vec{x}} F & \alpha\nabla^2_{\vec{y}\vec{y}} F
\\ \vec{0}_{n\times n} & \vec{0}_{n\times m} & \lambda\vec{I}_{n} & \vec{0}_{n\times m}
\\ \vec{0}_{m \times n} & \vec{0}_{m\times m} & \vec{0}_{m \times n} & \lambda\vec{I}_{m}
\end{array}\right).
\]
The determinant above is equal to $\lambda^{m+n} p(\lambda)$, where
\[
p(\lambda) = \textrm{det}\left(\begin{array}{cc}
\lambda\vec{I}_{n}- \vec{I}_n + (2-1/\lambda)\alpha\nabla^2_{\vec{x}\vec{x}} F &  (2-1/\lambda)\alpha\nabla^2_{\vec{x}\vec{y}} F
\\ (1/\lambda-2)\alpha\nabla^2_{\vec{y}\vec{x}} F & \lambda\vec{I}_{m} - \vec{I}_m + (1/\lambda-2)\alpha\nabla^2_{\vec{y}\vec{y}} F
\end{array}\right).
\]
It is clear that $\lambda = \frac{1}{2}$ is not an eigenvalue of $J_{\textrm{OGDA}}$. Let $q_{\textrm{GDA}}(\lambda)$ be the characteristic polynomial of $J_{\textrm{GDA}}$ (\ref{eq:Jh}, Jacobian of GDA dynamics at $(\vec{x},\vec{y})$). The characteristic polynomial $q_{\textrm{OGDA}}$ of $J_{\textrm{OGDA}}$ ends up being equal to
\[
\textrm{det}\left(\begin{array}{cc}
\lambda^2\vec{I}_{n}- \lambda\vec{I}_n + (2\lambda-1)\alpha\nabla^2_{\vec{x}\vec{x}} F &  (2\lambda-1)\alpha\nabla^2_{\vec{x}\vec{y}} F
\\ -(2\lambda-1)\alpha\nabla^2_{\vec{y}\vec{x}} F & \lambda^2\vec{I}_{m} - \lambda\vec{I}_m - (2\lambda-1)\alpha\nabla^2_{\vec{y}\vec{y}} F
\end{array}\right).
\]
 Therefore
 \begin{equation}\label{eq:polynomials}
 q_{\textrm{OGDA}}(\lambda) = (2\lambda-1)^{n+m} q_{\textrm{GDA}} \left(\frac{\lambda^2 + \lambda-1}{2\lambda-1} \right) .
 \end{equation}

Let $r$ be an eigenvalue of matrix $H_{\textrm{GDA}}$ (\ref{eq:H}), i.e., $r+1$ is an eigenvalue of $J_{\textrm{GDA}}$. From relation (\ref{eq:polynomials}) it turns out that the roots of the polynomial
\begin{equation}\label{eq:quadratic}
\lambda^2 - \lambda (1+ 2r) +r =0,
\end{equation}
are eigenvalues of the matrix $J_{\textrm{OGDA}}$.
For $\alpha<\frac{1}{2L}$ it holds that $|r|<\frac{1}{2}$ and it turns out that all the roots of the quadratic equation (\ref{eq:quadratic}) have magnitude at most one (see Mathematica code in Section \ref{sec:math} for a proof of the inequality).
\end{proof}
We conclude the subsection with the following claim and a remark.
\begin{lemma}\label{lem:last} There are functions with critical points that are OGDA-stable but not GDA-stable.
\end{lemma}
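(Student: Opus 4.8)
The plan is to re-use the bilinear example $f(x,y) = xy$ already treated in Lemma~\ref{lem:Hcomplex}. Its unique critical point $(0,0)$ is a local min-max, and the Jacobian of the GDA update at $(0,0)$ has eigenvalues $1 \pm \alpha i$, of magnitude $\sqrt{1+\alpha^2} > 1$ for every $\alpha > 0$; hence $(0,0)$ is GDA-unstable (indeed GDA diverges from every nonzero start, as shown there). It therefore suffices to prove that the associated OGDA fixed point $(0,0,0,0)$ is OGDA-stable for all sufficiently small $\alpha>0$.

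To do this I would invoke the eigenvalue correspondence derived in the proof of Lemma~\ref{lem:ogdastability}: by \eqref{eq:polynomials}--\eqref{eq:quadratic}, every eigenvalue of $J_{\textrm{OGDA}}$ is a root of $\lambda^2 - \lambda(1+2r) + r = 0$ for some $r$ with $1+r$ an eigenvalue of $J_{\textrm{GDA}}$, and each such $r$ contributes exactly two roots, so that all $2(n+m)=4$ eigenvalues of $J_{\textrm{OGDA}}$ arise this way. Here $n=m=1$ and $r \in \{\alpha i,\, -\alpha i\}$. Solving the quadratic for $r = \alpha i$ gives $\lambda = \tfrac12\bigl(1 \pm \sqrt{1-4\alpha^2}\bigr) + \alpha i$, so, writing $s = \sqrt{1-4\alpha^2}$ and using $\alpha^2 = \tfrac14(1-s^2)$, one obtains $|\lambda|^2 = \tfrac14\bigl((1\pm s)^2 + 1 - s^2\bigr) = \tfrac12(1\pm s)$, which is strictly less than $1$ for every $0 < \alpha < \tfrac12$; the case $r = -\alpha i$ is the complex conjugate and yields the same magnitudes. (Equivalently, since $|r| = \alpha < \tfrac12 = \tfrac{1}{2L}$, one can simply quote the bound on the roots of \eqref{eq:quadratic} verified in the proof of Lemma~\ref{lem:ogdastability}, which uses only $|r| < \tfrac12$ and not GDA-stability of the underlying point.)

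Consequently $\rho(J_{\textrm{OGDA}}) < 1$, so $(0,0,0,0)$ is OGDA-stable---in fact asymptotically stable, since the fixed point is hyperbolic (Proposition~\ref{prop:linear implies stable})---while $(0,0)$ is not GDA-stable, which is exactly the desired separation and shows the inclusion $\textrm{GDA-stable} \subset \textrm{OGDA-stable}$ is strict. I do not anticipate any real obstacle: the only place that needs care is the magnitude bound on the roots of \eqref{eq:quadratic}, and the explicit two-variable computation above makes it elementary and avoids the general inequality. I would close with a remark noting that this example is consistent with the known last-iterate convergence of OGDA on bilinear min-max games~\cite{DISZ17}.
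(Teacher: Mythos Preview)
Your proposal is correct and uses the same example $f(x,y)=xy$ as the paper, but you reach OGDA-stability by a cleaner route. The paper writes down the $4\times 4$ Jacobian $J_{\textrm{OGDA}}$ explicitly, states its eigenvalues as $\tfrac12\bigl(1\pm\sqrt{1-8\alpha^2\pm 4\sqrt{4\alpha^4-\alpha^2}}\bigr)$, and defers the magnitude bound to a Mathematica verification. You instead invoke the quadratic correspondence \eqref{eq:quadratic} with $r=\pm\alpha i$, solve it in closed form to get $\lambda=\tfrac12(1\pm\sqrt{1-4\alpha^2})+\alpha i$ (and conjugates), and compute $|\lambda|^2=\tfrac12(1\pm\sqrt{1-4\alpha^2})<1$ by hand. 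This is more transparent and avoids machine verification; it also makes visible that the paper's nested-radical expression simplifies to yours.

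One caveat: your parenthetical shortcut---``one can simply quote the bound on the roots of \eqref{eq:quadratic} \ldots\ which uses only $|r|<\tfrac12$''---is not valid. The verification in the paper (Section~\ref{sec:math}) assumes \emph{both} $|r|<\tfrac12$ and $|1+r|<1$; the latter is exactly GDA-stability and fails here since $|1+\alpha i|>1$. And $|r|<\tfrac12$ alone genuinely does not suffice: for $r=0.4$ the quadratic $\lambda^2-1.8\lambda+0.4=0$ has a root $\approx 1.54$. So drop that parenthetical and rely on your explicit computation, which stands on its own.
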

\begin{proof}
The easiest example is $f(x,y) = xy$. It is clear that the Jacobian of GDA dynamics (\ref{eq:gradient}) is given by
\begin{equation}\label{eq:exampleJ}
J = \left(\begin{array}{cc}
1  & - \alpha
\\ \alpha  & 1
\end{array}\right),
\end{equation} which has eigenvalues $1\pm \alpha i$ (magnitude greater than one) and hence the critical point $(0,0)$ is GDA-unstable. However, the Jacobian of OGDA dynamics (\ref{eq:OGDA}) is given by
\begin{equation}\label{eq:Jgexample}
J_{\textrm{OGDA}} = \left(\begin{array}{cccc}
1  & - 2\alpha & 0 & \alpha
\\ 2\alpha & 1 & -\alpha & 0
\\ 1 & 0 & 0 & 0
\\ 0 & 1 & 0 & 0
\end{array}\right),
\end{equation}
which has the four eigenvalues $\frac{1}{2}(1\pm \sqrt{1-8\alpha^2 \pm 4 \sqrt{4\alpha^4-\alpha^2}})$. For $0<\alpha<1/2$ all the four eigenvalues have magnitude less than or equal to 1, hence $(0,0)$ is OGDA-stable (see mathematica code \ref{sec:math2} for the inequality claim). Another example which is not bilinear (Assumption~\ref{ass:invertible} is satisfied) is the function $\frac{1}{2}x^2+\frac{1}{2}y^2+4xy$ (this is used in the example section).
\end{proof}
\begin{remark}
We would like to note that some of our results (e.g., Lemma \ref{lem:ogdadiffeomorphism} and Theorem \ref{thm:measure2}) are not applicable to a generic bilinear function $f(\vec{x},\vec{y}) = \vec{x}^{\top}A\vec{y}$, since if $A$ is not a square matrix, the Hessian $\nabla^2 f$ is not invertible (they are applicable only when $A$ is square matrix and invertible).
\end{remark}

\section{Examples and Experiments} \label{sec:experiments}
In this section we provide two examples/experiments, one 2-dimensional (function $f: \mathbb{R}^2 \to \mathbb{R}, x,y \in \mathbb{R}$) and one higher dimensional ($f:\mathbb{R}^{10} \to \mathbb{R}, \vec{x},\vec{y} \in \mathbb{R}^5$). The purpose of these experiments is to get better intuition about our findings. In the 2-dimensional example, we construct a function with local min-max, \{GDA, OGDA\}-unstable and \{GDA, OGDA\}-stable critical points. Moreover, we get $10000$ random initializations from the domain $R = \{(x,y): -5 \leq x,y \leq 5\}$ and we compute the probability to reach each critical point for both GDA and OGDA dynamics. In the higher dimensional experiment, we construct a polynomial function $p(\vec{x},\vec{y})$ of degree $3$ with coefficients sampled i.i.d from uniform distribution with support $[-1,1]$ and then we plant a local min max. Under $10000$ random initializations in $R$, we analyze the convergence properties of GDA and OGDA (as in the two dimensional case).
\subsection{A 2D example}
The function $f_1(x,y) = -\frac{1}{8}x^2 - \frac{1}{2}y^2 + \frac{6}{10}xy$ has the property that the critical point $(0,0)$ is GDA-stable but not local min-max (see Lemma \ref{lem:converse}). Moreover, consider $f_2(x,y) = \frac{1}{2} x^2 + \frac{1}{2} y^2 + 4xy$. This function has the property that the critical point $(0,0)$ is GDA-unstable and is easy to check that is not a local min-max. We construct the polynomial function $f(x,y)= f_1(x,y)(x-1)^2(y-1)^2+f_2(x,y)x^2y^2$. Function $f$ has the property that around $(0,0)$ behaves like $f_1$ and around $(1,1)$ behaves like $f_2$. The GDA dynamics of $f$ can be seen in Figure \ref{fig:GDAstable2}. However more critical points are created. There are five critical points, i.e, $(0,0), (0,1), (1,0), (1,1), (0.3301, 0.3357)$ (in interval $R$, the last critical point is computed approximately). In Table \ref{table:critical} we observe that the critical point $(0,0)$ is stable for OGDA but unstable for GDA (essentially OGDA has more attracting critical points). Moreover, our theorems of avoiding unstable fixed points are verified with this experiment. Note that there are some initial conditions that GDA and OGDA dynamics don't converge (3\% and 9.8\% respectively).
\begin{figure}[t!]
		\centering     
			\includegraphics[width=0.80\linewidth]{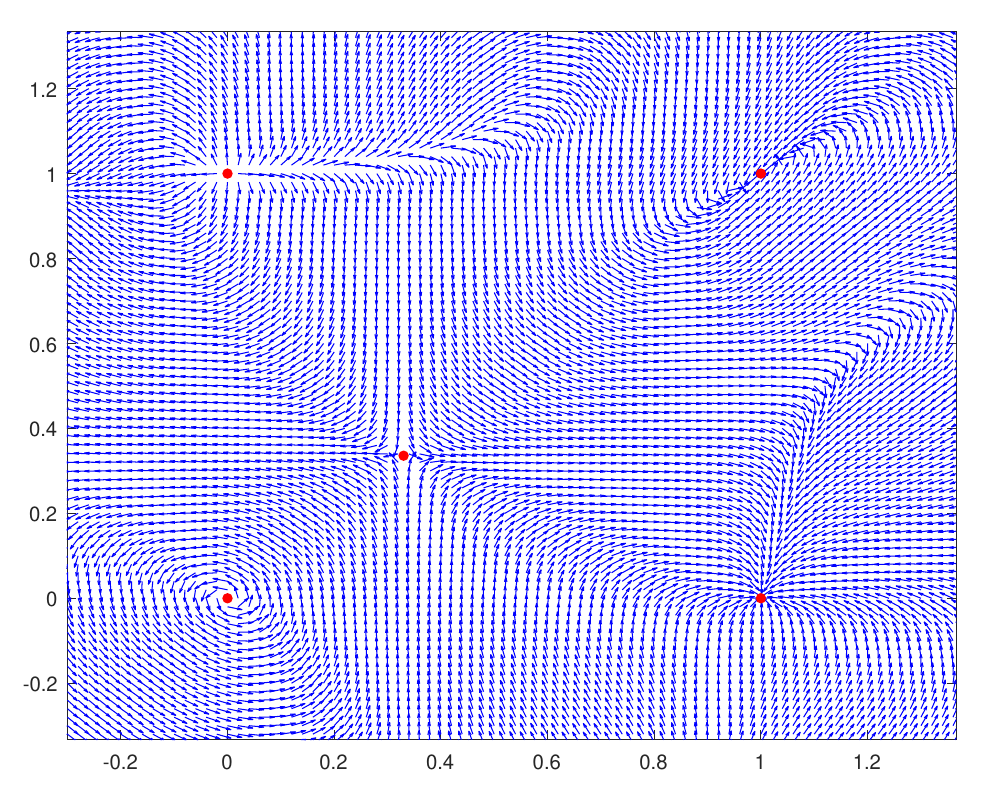}
			\caption{Construction of a function with points that are GDA-stable and local min-max, GDA-stable and not local min-max and GDA-unstable (and hence not local min-max). The arrows point towards the next step of the Gradient Descent/Ascent dynamics.}
			{\label{fig:GDAstable2}}
		\end{figure}
\begin{table}[ht!]
     \centering
\begin{tabular}{ |p{2.3cm}|p{1.3cm}|p{1.3cm}|p{1.3cm}|p{1.3cm}|p{1.7cm}|p{1.9cm}| }
 \hline
 Critical point & GDA-stable & OGDA-stable & Local min-max & value of $f$ & Prob. GDA converges& Prob. OGDA converges\\
 \hline
  \hline
$(0,0)$ & NO & YES & NO & 0 & 0\% & 25.8\%\\
\hline
$(0,1)$ & NO & NO & NO & 0 & 0\% & 0\% \\
\hline
$(1,0)$ & YES & YES & YES & 0 & 78\% & 35.4\%\\
  \hline
 $(1,1)$ & YES & YES & NO & 0 & 19\% & 29\% \\
  \hline
 $(0.3301, 0.3357)$ & NO & NO & NO & 0.109 & 0\% & 0\%\\
 \hline
\end{tabular}
 \caption{Summary of critical points of $f$.}
 \label{table:critical}
\end{table}

\vspace{-10pt}

\subsection{Higher dimensional}
Let $f (\vec{x},\vec{y}) := p(\vec{x},\vec{y}) \cdot (\sum_{i=1}^5 x_i^3+y_i^3 ) + w(\vec{x},\vec{y})$, where $p$ is the random 3-degree polynomial as mentioned above and $w(\vec{x},\vec{y}) = \sum_{i=1}^5 (x_i^2 - y_i^2)$. It is clear that $f$ locally at $(0,...,0)$ behaves like function $w$ (which has $\vec{0}$ as a local min-max critical point). We run for 10000 uniformly random points in $R$ and it turns out that 87\% of initial points converge to $\vec{0}$ in OGDA as opposed to GDA which 79.3\% fraction converged. This experiment indicates qualitative difference between the two methods, where the area of region of attraction in OGDA is a bit larger.
\section{Conclusion}
In this paper we made a step towards understanding first order methods which are used to solve min-max optimization problems, by analyzing the local behavior of GDA and OGDA dynamics around critical points. 
Our paper is an indication that important first order methods we analyze fail to converge to only local min-max solutions(standard concept in optimization literature). Whether or not local min-max solutions is a good concept is out of the scope of this paper. Local min-max solutions might not be all equally good and some may be bad, which is really important in applications such as training GANs. Nevertheless, even for minimization problems, finding good local minima is a hard task that is not well understood in the literature (most first order methods guarantee convergence to some local minimum, without guarantees about its quality). A forteriori guaranteeing good solutions in a min-max problem is a harder proposition and an important open question.

\bibliographystyle{plain}
\bibliography{sigproc3}

\appendix

\section{Missing theorems and proofs}

\begin{theorem}[Center-stable manifold theorem, III.7  \cite{shub1987global}]\label{thm:manifold}
	Let $x^*$ be a fixed point for the $C^r$ local diffeomorphism $g: \mathcal{X} \to \mathcal{X}$. Suppose that $E = E_s \oplus E_u$, where $E_s$ is the span of the eigenvectors corresponding to eigenvalues of magnitude less than or equal to one of $D g(x^*)$, and $E_u$ is the span of the eigenvectors corresponding to eigenvalues of magnitude greater than one of $D g(x^*)$\footnote{Jacobian of function $g$.}. Then there exists a $C^r$ embedded disk $W^{cs}_{loc}$ of dimension $dim(E^s)$ that is tangent to $E_s$ at $x^*$ called the \emph{local stable center manifold}.  Moreover, there exists a neighborhood $B$ of $x^*$, such that $g(W^{cs}_{loc} ) \cap B \subset W^{cs} _{loc}$, and $\cap_{k=0}^\infty g^{-k} (B) \subset W^{cs}_{loc}$.
\end{theorem}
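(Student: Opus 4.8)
This is a classical result; the plan is to recall its proof by the Lyapunov--Perron (variation-of-constants) method, the same technique underlying the stable manifold theorem, with the center directions folded into the stable block. After translating so that $x^* = 0$, write $g(x) = Ax + R(x)$ with $A = Dg(x^*)$, $R \in C^r$, $R(0) = 0$ and $DR(0) = 0$. Multiplying $R$ by a smooth cutoff supported in a small ball $B$ around $0$, we may assume $R$ is globally defined with Lipschitz constant and $C^1$-norm at most $\varepsilon$, where $\varepsilon$ can be made as small as we like by shrinking $B$; this alters $g$ only outside $B$, which is harmless for a local statement. Decompose the ambient space as $E_s \oplus E_u$ as in the hypothesis. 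Since every eigenvalue of $A|_{E_u}$ has modulus $> 1$, we may fix $\mu$ with $1 < \mu < \min\{|\lambda| : \lambda \in \mathrm{sp}(A|_{E_u})\}$; then automatically $\mu > 1 \ge \max\{|\lambda| : \lambda \in \mathrm{sp}(A|_{E_s})\}$, and after passing to an adapted norm we have $\norm{A|_{E_s}} < \mu$ and $\norm{(A|_{E_u})^{-1}} < \mu^{-1}$ (we use here that $A|_{E_u}$ is invertible, its eigenvalues being nonzero; the local-diffeomorphism hypothesis, which makes $A$ itself invertible, is convenient for the cutoff and for the regularity step).

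Next I would introduce the Banach space $\Sigma$ of forward sequences $(x_n)_{n \ge 0}$ in the ambient space with $\norm[\mu]{(x_n)} := \sup_{n \ge 0} \mu^{-n}\norm{x_n} < \infty$, and for each small $\xi \in E_s$ define the Lyapunov--Perron operator $\mathcal{T}_\xi$ on a small ball of $\Sigma$: its $E_s$-component solves $x^s_{n+1} = A|_{E_s}\,x^s_n + R^s(x_n)$ forward from $x^s_0 = \xi$, and its $E_u$-component solves $x^u_n = -\sum_{k \ge n}(A|_{E_u})^{\,n-k-1}R^u(x_k)$ backward. By construction a fixed point of $\mathcal{T}_\xi$ is exactly a $g$-orbit lying in $\Sigma$ with $x^s_0 = \xi$. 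Routine geometric-series estimates (using $\norm{A|_{E_s}} < \mu$ for the forward sum and $\norm{(A|_{E_u})^{-1}} < \mu^{-1}$ for the backward sum) show that $\mathcal{T}_\xi$ preserves a small ball of $\Sigma$ and is a $\tfrac12$-contraction once $\varepsilon$ is small, so Banach's theorem gives a unique orbit $x(\xi) = (x_n(\xi))_n$ depending continuously on $\xi$. Since the forward sum defining $x^s_0(\xi)$ is empty, $x_0(\xi) = (\xi, \phi(\xi))$ with $\phi(\xi) := -\sum_{k \ge 0}(A|_{E_u})^{-k-1}R^u(x_k(\xi))$, so $W^{cs}_{loc} := \{(\xi,\phi(\xi)) : \norm{\xi} \text{ small}\}$ is a graph over $E_s$; tangency to $E_s$ at $0$ follows from $x(0) \equiv 0$, $\phi(0) = 0$ and $DR(0) = 0 \Rightarrow D\phi(0) = 0$. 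The two invariance statements both follow from uniqueness: a forward orbit staying in a sufficiently small ball $B$ lies, since $\mu > 1$, in $\Sigma$ with small $\norm[\mu]{\cdot}$, hence equals some $x(\xi)$ and thus begins on the graph, giving $\bigcap_{k \ge 0} g^{-k}(B) \subset W^{cs}_{loc}$; and the one-step shift of $x(\xi)$ is again in $\Sigma$, so by uniqueness equals $x(x^s_1(\xi))$, a point of the graph, provided $B$ is small enough that $x^s_1(\xi)$ stays admissible, giving $g(W^{cs}_{loc}) \cap B \subset W^{cs}_{loc}$.

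The main obstacle is the $C^r$ regularity of $\phi$, equivalently of $\xi \mapsto x(\xi)$: the uniform contraction principle yields only continuity, and Lipschitz dependence with a bit more work. To upgrade to $C^r$ I would invoke the fiber-contraction / $C^r$-section theorem. Differentiating the fixed-point equation formally shows the candidate derivative $D_\xi x(\xi)$ must itself be the fixed point of a contraction, now on a sequence space carrying weight $\mu$ (and weight $\mu^{j}$ for the $j$-th derivative); one checks these weighted operators are still contractions, that their fixed points are genuine derivatives, and then closes an induction on the order with a final continuity argument for the top derivative. The point requiring care is that $\mu$ must be chosen close enough to $1$ that $\mu^{r}$ is still strictly below $\min\{|\lambda| : \lambda \in \mathrm{sp}(A|_{E_u})\}$ --- possible precisely because $r$ is finite and the spectral gap is genuine --- so that the order-$r$ weighted operator remains contracting. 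This regularity step is where Shub's III.7 does its real work, and I would cite it rather than reprove it in full.
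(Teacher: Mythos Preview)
The paper does not prove this theorem at all: it is stated in the appendix as a quotation of a classical result (Shub, \emph{Global Stability of Dynamical Systems}, III.7) and is then used as a black box in the proofs of Theorems~\ref{thm:measure} and~\ref{thm:measure2}. So there is no ``paper's own proof'' to compare against.

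Your sketch is the standard Lyapunov--Perron construction and is essentially correct as an outline of how the cited result is obtained. The one place to be slightly careful is that the theorem as stated includes the \emph{center} directions (eigenvalues of modulus exactly one) inside $E_s$, so there is no spectral gap between the center block and the unstable block in the usual sense; your choice of $\mu$ strictly between $1$ and the smallest unstable eigenvalue modulus handles this correctly, but the phrase ``the spectral gap is genuine'' in your regularity discussion should be read as the gap between $1$ and $\min\{|\lambda|:\lambda\in\mathrm{sp}(A|_{E_u})\}$, not between the center and stable blocks. With that understood, the fiber-contraction argument you describe goes through. Since the paper simply cites Shub, replacing your sketch by a reference to that source would match the paper exactly.
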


\begin{proof}[Proof of Theorem \ref{thm:measure} and Theorem \ref{thm:measure2}]
It follows the general line of the papers \cite{LPPSJR17, MPP15, LSJR16, PP17, JZBWJ16}. We assume that the update rule of GDA, OGDA dynamics is a diffeomorphism (as proved in Lemmas \ref{lem:GDAdiffeomorphism} and \ref{lem:ogdadiffeomorphism}). The proof is generic and has appeared in \cite{LPPSJR17}. Let $A$ be the set of unstable critical points $x^*$ of a dynamical system with update rule a function $g: \mathcal{X} \to \mathcal{X}$ (in $C^2$). For each $x^* \in A$, there is an associated open neighborhood $B_{x^*}$ promised by the Stable Manifold Theorem \ref{thm:manifold}. $ \cup_{x^* \in A} B_{x^*}$ forms an open cover, and since $\mathcal{X}$ is second-countable we can extract a countable subcover, so that $\cup_{x^* \in A} B_{x^*}= \cup_{i=1}^\infty B_{x^*_i}$.

Define $W=\{x_0: \lim_k  x_k \in A\}$ (stable set of $A$). Fix a point $x_0 \in W$. Since $x_k \to x^* \in A $, then for some non-negative integer $T$ and all $t\ge T$,  $g^t(x_0) \in \cup_{x^* \in A} B_{x^*}$. Since we have a countable sub-cover, $g^t(x_0) \in B_{x^*_i} $ for some  $x^* _i \in A$ and all $t\ge T$. This implies that $g^t (x_0) \in \cap_{k=0}^\infty\ g^{-k} ( B_{x^*_i})$ for all $t\ge T$. By Theorem \ref{thm:manifold}, $ S_i \triangleq \cap_{k=0}^\infty g^{-k} ( B_{x^*_i})$  is a subset of the local center stable manifold  which has co-dimension at least one, and $S_i$ is thus measure zero.

Finally, $g^T(x_0) \in S_i$ implies that $x_0 \in g^{-T} ( S_i)$. Since $T$ is unknown we union over all non-negative integers, to obtain $x_0 \in \cup_{j=0}^\infty g^{-j} (S_i)$. Since $x_0$ was arbitrary, we have shown that $W \subset \cup_{i=1}^\infty  \cup_{j=0}^\infty g^{-j} (S_i)$. Using Lemma 1 of page 5 in \cite{LPPSJR17} and that countable union of measure zero sets is measure zero, $W$ has measure zero.

\end{proof}

\subsection{Mathematica code for proving claim in Lemma~\ref{lem:ogdastability}}\label{sec:math}
\begin{verbatim}
Reduce[Norm[r] < 1/2 && Norm[1 + r] < 1
&& (Norm[r + 1/2 - 1/2*Sqrt[4 r^2 + 1]] > 1
|| Norm[r + 1/2 + 1/2*Sqrt[4 r^2 + 1]] > 1), r, Complexes]

False
\end{verbatim}

\subsection{Mathematica code for proving claim in Lemma~\ref{lem:last}}\label{sec:math2}
\begin{verbatim}
Reduce[Abs[1/2 (1 + Sqrt[1 - 8 x^2 + 4 Sqrt[-x^2 + 4 x^4]])] > 1 &&
  0 < x < 1/2]

False

Reduce[Abs[1/2 (1 - Sqrt[1 - 8 x^2 - 4 Sqrt[-x^2 + 4 x^4]])] > 1 &&
  0 < x < 1/2]

False

Reduce[Abs[1/2 (1 + Sqrt[1 - 8 x^2 - 4 Sqrt[-x^2 + 4 x^4]])] > 1 &&
  0 < x < 1/2]

False

Reduce[Abs[1/2 (1 - Sqrt[1 - 8 x^2 + 4 Sqrt[-x^2 + 4 x^4]])] > 1 &&
  0 < x < 1/2]

False
\end{verbatim}
\end{document}